\documentclass[11pt]{amsart}

\usepackage[T1]{fontenc}
\usepackage[utf8]{inputenc}
\usepackage[british]{babel}
\usepackage{amsmath,amssymb,amsfonts,amsthm}
\usepackage{mathtools}
\usepackage{enumitem}
\usepackage{url}
\usepackage{microtype}

\newcommand{\N}{\mathbb N}
\newcommand{\E}{\mathbb E}

\newcommand{\calG}{\mathcal G}
\newcommand{\calP}{\mathcal P}
\newcommand{\calQ}{\mathcal Q}
\newcommand{\calS}{\mathcal S}

\newcommand{\TV}{\operatorname{TV}}

\newtheorem{theorem}{Theorem}[section]
\newtheorem*{theorem*}{Theorem}

\newtheorem{lemma}[theorem]{Lemma}

\newtheorem{definition}[theorem]{Definition}

\theoremstyle{remark}

\title[Star observations in bounded-degree graphs]{Star observations in bounded-degree graphs}
\author{Bal\'azs Szegedy}
\date{\today}

\address{Rényi Institute of Mathematics, Budapest, Hungary}
\email{szegedyb@gmail.com}

\subjclass[2020]{05C80, 05C82, 37A15}
\keywords{graph limits, local-global convergence, action convergence, graphings, bounded-degree graphs}

\begin{document}

\begin{abstract}
Similarity metrics are central in the theory of large networks and graph limits. For bounded-degree graphs, the Benjamini--Schramm metric records the distribution of rooted neighbourhoods, while the stronger colored-neighbourhood metric \cite{BR} gives rise to local-global convergence \cite{HLSz}. In this paper we show that this intricate topology is already determined by much smaller observations. For technical convenience and greater generality, we work with graphings, which are measurable generalizations of finite graphs and include all finite graphs as special cases. We prove that, for graphings of uniformly bounded degree, convergence of all colored degree distributions, or equivalently of all colored star statistics, is equivalent to local-global convergence. We also introduce an even more economical sampling procedure, the colored cherry metric, in which one observes only the root and two randomly chosen neighbours, and prove that it induces the same topology. Thus the full local-global structure can be reconstructed, at the level of topology, from families of very small colored observations. Our star-observation theorem was previously announced in the work of Backhausz and the author \cite{BSzAction} as an important ingredient in the proof that the so-called action convergence unifies dense graph limit theory with local-global convergence, thereby providing a general graph limit theory for sparse, dense, and intermediate-density graphs.
\end{abstract}

\maketitle

\section{Introduction}

A fundamental problem in graph limit theory \cite{L} is to decide when two large finite graphs should be considered similar.  Different answers lead to different limit theories.  For dense graphs, one may sample a fixed number of uniformly random vertices and record the induced subgraph \cite{LovaszSzegedyDense}. Similarity is then measured by comparing the resulting finite subgraph distributions.  This approach is powerful in the dense setting, but it degenerates for sparse graphs: if a graph has $o(n^2)$ edges, then a uniformly sampled set of $k$ vertices typically spans no edges.

For bounded-degree graphs, the natural replacement is local sampling.  Instead of sampling an induced subgraph on random vertices, one chooses a uniformly random root and observes its radius-$r$ induced neighbourhood.  The Benjamini--Schramm \cite{BS} topology is generated by the distributions of these rooted neighbourhoods as $r$ varies.

A substantial refinement appears when one allows arbitrary vertex colorings before making the observation. In the framework of so-called local-global convergence \cite{BR,HLSz,NesetrilOssonaLocalGlobal}, one compares, for every number of colors $k$ and every radius $r$, the set of all possible distributions of rooted $k$-colored radius-$r$ neighbourhoods. This topology is substantially stronger than ordinary Benjamini--Schramm convergence: it records not only the local structure of the graph, but also how this structure interacts with arbitrary global partitions of the vertex set. Beyond local properties, it detects important graph invariants such as the independence ratio, cut densities, and large-scale expansion.

Graphings \cite{Levitt,L,HLSz} provide a natural extension of graph theory in the limit theory of bounded-degree graphs. They are bounded-degree Borel graphs on probability spaces satisfying a measure-preserving condition, and they arise as limit objects for Benjamini--Schramm convergent graph sequences \cite{ElekGraphings} as well as for local-global convergent sequences \cite{HLSz}. In this paper we develop our results in the general setting of graphings. This not only gives a more general statement, but also leads to cleaner and more transparent proofs.

The main result of this paper (see Theorem \ref{thm:main}) is that, for bounded-degree graphings, the full local-global topology is already determined by non-induced radius-one observations.  Equivalently, it suffices to observe all possible colored stars.  A colored star records the color of a vertex and the numbers of its neighbours of each color; it does not record edges among neighbours.  Thus throughout the paper we distinguish between full induced rooted radius-$r$ balls, used in the definition of local-global convergence, and non-induced rooted colored stars, used in the main theorem. It is surprising that such star data determine the entire local-global topology. It is not even a priori clear that they determine the Benjamini--Schramm topology. 

We also consider a still smaller-looking observation, which we call the colored cherry statistic. Given a colored graph, one chooses a random root. If the root has degree zero, one observes only the rooted colored vertex; if it has degree one, one observes the rooted colored edge; and if it has degree at least two, one observes the root together with two uniformly chosen distinct neighbours. A single cherry sees much less than a star, since it only samples a pair of neighbours rather than the whole multiset of neighbour colors. The second main result of the paper (see Theorem \ref{thm:cherry-equivalence}) is that, after allowing arbitrary colorings, colored cherries determine the same topology as colored stars. Hence colored cherries, colored stars, and colored neighbourhoods all give topologically equivalent notions of convergence for bounded-degree graphings. 

Theorem~\ref{thm:cherry-equivalence} on cherry observations is, in a natural sense, optimal. If cherries are replaced by the still smaller observation consisting of a single rooted coloured edge, then the resulting statistics recover only certain quotient data of the graph. Graph-limit notions based on graph quotients can be interesting even beyond the bounded-degree setting \cite{KKLS}. However, examples show that such quotient data are not sufficient to recover the local structure of bounded-degree graphs. A striking feature of the coloured cherry convergence introduced here is that its sample space does not depend on the maximum degree. It therefore extends naturally, and compactly, to arbitrary graph sequences. This opens up a possible new direction in the theory of general graph limits, based on observations of paths of length two and related generalized quotients.

Our results are also useful beyond the internal comparison of local graph metrics. In the paper \emph{Action convergence of operators and graphs} \cite{BSzAction}, Backhausz and the author of the present paper introduced action convergence, a functional-analytic framework for graph and operator limits. One of the main claims of that work is that action convergence unifies dense graph limit theory with sparse local-global convergence. The star-observation theorem proved here (and announced in \cite{BSzAction}), is an important ingredient in the proof of the local-global part of that unification. 

\begin{definition}[colored degree distribution]
Let $G$ be a finite graph and let $c\colon V(G)\to [k]$ be a $k$-coloring, where $[k]=\{1,\ldots,k\}$.  The \emph{colored degree} of a vertex $v\in V(G)$ is
\[
 d_c(v)=\bigl(c(v),d_1(v),\ldots,d_k(v)\bigr)\in [k]\times\N^k,
\]
where $d_i(v)$ is the number of neighbours of $v$ whose color is $i$.  The \emph{colored degree distribution} of $(G,c)$ is the distribution of $d_c(v)$ when $v$ is chosen uniformly at random from $V(G)$.
\end{definition}

When $k=1$, this is just the ordinary degree distribution.  For larger $k$, however, the family of all colored degree distributions is much richer.  It is precisely this family that defines the colored star statistics of the graph.

The two main theorems proved below may be stated informally as follows.

\begin{theorem*}
For graphings of uniformly bounded degree, convergence of all colored star statistics is equivalent to local-global convergence. The same topology is also induced by convergence of all colored cherry statistics.
\end{theorem*}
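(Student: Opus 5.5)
The plan has three steps: express local-global convergence via sets of colored-neighbourhood statistics in the colored-neighbourhood metric, reduce colored radius-$r$ neighbourhoods to colored stars, and then reduce stars to cherries. Throughout, let $\mathcal S_k(G)$ be the closure of the set of colored star distributions (colored degree distributions) of a graphing $G$ with measurable $k$-colorings. Let $\mathcal Q_{k,r}(G)$ be the corresponding closure for induced colored radius-$r$ balls. Local-global convergence means Hausdorff convergence of every $\mathcal Q_{k,r}$.

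One direction is immediate. A colored star is a measurable function of the colored radius-one ball, so it is also a function of the colored radius-$r$ ball. The pushforward map is continuous and commutes with taking closures, so Hausdorff convergence of $\mathcal Q_{k,1}$ gives Hausdorff convergence of $\mathcal S_k$. Cherries are obtained from stars by a further continuous (averaging) map: given the star $(c(v),d_1,\dots,d_k)$, the cherry law is an explicit polynomial function of the $d_i$. Hence cherry statistics are dominated by star statistics.

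The converse, from stars to local-global, is the heart of the proof. The key idea is a \emph{color-encoding} trick. Given a $k$-coloring $c$ and radius $r$, I will construct a coloring $c'$ with $K=K(k,r,d)$ colors such that the colored star of $v$ under $c'$ determines the $c$-colored induced radius-$r$ ball of $v$. I would do this by induction on $r$, using an auxiliary coloring that is a proper distance-$2r$ coloring of $G$ with $N(d,r)$ colors. Such a coloring exists measurably for bounded-degree Borel graphs by Kechris--Solecki--Todorcevic. In the encoding, each vertex's new color records its own $c$-color, its distance-$2r$ label, and the multiset of (label, $c$-color) pairs of its neighbours. The star then sees the neighbours' labels and the adjacency lists they carry, and because labels are distinct within the ball, these data glue to the full induced ball. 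In particular, edges among neighbours become visible.

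What remains is to show that the encoding transfers between graphings. Suppose the star sets of $G_n$ are Hausdorff close to those of $G$. We must realise, in $G_n$, a coloring whose ball statistics approximate a given ball statistic of $G$. Take the star distribution of $c'$ on $G$ and find a coloring $c'_n$ of $G_n$ with nearly the same star distribution. The difficulty is that $c'_n$ need not be \emph{consistent}: its labels need not be a proper distance coloring, and the claimed neighbour data need not match actual neighbours. However, consistency of $c'$ is a star-checkable condition at each vertex: the recorded neighbour multiset must equal the actual neighbour colors, and labels among neighbours must be distinct. Since this holds almost everywhere in $G$, it fails only on a small-measure set in $G_n$. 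I would then need a lemma that local errors on measure $\varepsilon$ affect radius-$r$ balls only on measure at most $d^{r}\varepsilon$, using the bounded degree and the mass transport principle. Decoding then yields a $k$-coloring of $G_n$ whose ball distribution is close to that of $c$ on $G$. Applying the same argument symmetrically gives both halves of Hausdorff closeness.

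For cherries implying stars, I would use a similar encoding. Record in each vertex color its $c$-color, a label from a proper distance-$2$ coloring, and its full colored degree. A cherry then sees two neighbours, each carrying its own label. The root's claimed neighbour multiset can be checked against the sampled neighbour colors in distribution. The main obstacle is this verification. A cherry only samples pairs, so consistency between the recorded multiset and the actual neighbours holds only on average. I expect to need a second-moment/identifiability argument: the joint law of ordered pairs of distinct neighbour colors determines the multiset of neighbour colors exactly when degrees are at most $d$. This reduces to the statement that a multiset is determined by its pair counts $d_i d_j-\delta_{ij}d_i$ together with the degree. Once this is established, the small-exceptional-set argument above completes the proof.
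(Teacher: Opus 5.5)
Your forward direction is fine, but the converse has a genuine gap. Your transfer step uses only one-sided closeness ($\calS_k(G)$ approximately contained in $\calS_k(G_n)$) together with star-checkable consistency. No argument of that shape can succeed, because star data are invariant under pulling colorings back along coverings. Let $G$ be a triangle, $G_n=C_{3m}$ with $m\geq 2$, and $\pi\colon C_{3m}\to G$ the covering map. For any encoding $c'$ on $G$, the coloring $c'\circ\pi$ has the same star at $v$ as $c'$ has at $\pi(v)$. So it has exactly the same star distribution and passes every check you list: recorded neighbour multisets agree with the actual ones, and labels around each vertex are distinct. Yet every decoded ball is a triangle and every actual ball is a path. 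Thus $\calS_k(G)\subseteq\calS_k(C_{3m})$ for all $k$, while $\calQ_{1,1}(G)$ and $\calQ_{1,1}(C_{3m})$ are disjoint. Your claim that distinct labels let the data ``glue to the full induced ball'' is true in $G$. In $G_n$, however, the labels are distinct only inside the \emph{claimed} ball, and the true ball may be an unfolding of it. What consistency really gives is that model walks lift bijectively to walks in $G_n$, hence only the inequality $|W_t^{0,G_n}(x)|\leq|W_t^{0,m(x)}(o)|$. The missing idea is to use the hypothesis in \emph{both} directions to equate a global invariant, the closed-walk densities $c_t$ (Lemmas~\ref{lem:closed-walk-stability} and~\ref{lem:closed-walks-equal}). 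The pointwise inequality plus equal averages then forces equal closed-walk counts off a small set. This rules out folding and identifies the model with the true ball (Lemma~\ref{lem:isomorphism-from-closed-walks}). This argument yields ``equal star sets imply local-global equivalence'' rather than a uniform estimate, so you also need the compactness theorem (Theorem~\ref{thm:HLS}) to pass to two subsequential limits.

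Your cherry step has the same defect, and identifiability of a multiset from its pair counts does not repair it. The cherry statistic sees only the \emph{mixture}, over all vertices carrying a given claimed star, of their pair laws. It also does not see the degree beyond the three cases $0$, $1$, $\geq 2$. For instance, replace each vertex $x$ of a cubic graph $G'$ by three copies $x_{\{a,b\}}$, one for each pair of neighbours. For each edge $xa$, match the two copies of $x$ whose index contains $a$ with the two copies of $a$ whose index contains $x$. The resulting graph $G$ is $2$-regular, and pulled-back colorings give $C_k(G')\subseteq C_k(G)$ for every $k$, yet the star sets of $G$ and $G'$ are disjoint. With separated labels, cherry-consistency yields only an injection from actual neighbours into claimed leaves, i.e.\ $\deg_G(x)\leq\deg(m(x))$ off a small set. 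The paper closes this gap the same way as before: it proves equality of average degrees from the two-sided equality of cherry sets (Lemma~\ref{lem:degree-inequality-from-cherries}). Then the non-negative integer defect $\deg(m(x))-\deg_G(x)$ vanishes off a small set.
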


The proof is organised as follows. Section~\ref{sec:local-global} recalls local-global convergence and graphings and defines the star statistics used in the main theorem. Section~\ref{sec:proof} proves the star theorem. Section~\ref{sec:cherry} proves the cherry theorem. 

\section{Local-global convergence, graphings and stars}\label{sec:local-global}

All graphs and graphings in this paper are simple, undirected, and loopless. Fix an integer $d\geq 2$. Let $\calG_d$ denote the class of finite graphs of maximum degree at most $d$, considered up to isomorphism. A rooted graph is a graph together with a distinguished vertex, called the root and denoted by $o$.

Recall that a $k$-coloring of a graph $G=(V,E)$ is a function $c\colon V\to [k]$, where $[k]=\{1,2,\dots,k\}$. Occasionally, it will be convenient to identify the color set with another finite set; this will always be clear from the context.

For integers $k\geq 1$ and $r\geq 0$, let $\calG_{d,k,r}$ be the finite set of isomorphism classes of rooted $k$-colored graphs of maximum degree at most $d$ in which every vertex lies at distance at most $r$ from the root. In particular, $\calG_{d,k,0}$ is the set of colored one-vertex rooted graphs.

Assume that $\Omega$ is a finite set. We write $\calP(\Omega)$ for the simplex of probability measures on $\Omega$. Recall that the total variation distance on $\calP(\Omega)$ is defined by
\[
 d_{\TV}(\mu,\nu)=\frac12\sum_{\alpha\in\Omega}|\mu(\alpha)-\nu(\alpha)|.
\]
If $A,B\subseteq\calP(\calG_{d,k,r})$, their Hausdorff distance (induced by the metric $d_{\TV}$) is denoted by $d_H(A,B)$.

Let $G=(V,E)\in\calG_d$ and let $f\colon V\to[k]$ be a coloring.  For $v\in V$, let
\[
 \gamma_{r,G,f}(v)\in\calG_{d,k,r}
\]
be the isomorphism class of the rooted, induced, colored radius-$r$ neighbourhood of $v$.  We write $\tau_r(G,f)$ for the distribution of $\gamma_{r,G,f}(v)$ when $v$ is uniformly random in $V$.  Finally, define the set
\[
 Q_{k,r}(G)=\{\tau_r(G,f)\,\bigm|\,f\colon V(G)\to[k]\} \subseteq \calP(\calG_{d,k,r}).
\]

\begin{definition}[Local-global convergence for graphs]
A sequence $(G_n)_{n\geq 1}$ in $\calG_d$ is \emph{local-global convergent} if, for every $k\geq 1,r\geq 0$, the sets $Q_{k,r}(G_n)$ converge in Hausdorff distance.
\end{definition}

Note that for finite graphs we have that $Q_{k,r}(G)$ is a finite set and hence it is closed. It will be important in this paper that local-global convergence extends naturally to graphings.

\begin{definition}[Graphing]
Let $X$ be a Polish topological space and let $\nu$ be a probability measure on the Borel
sets in $X$. A \emph{graphing} of maximum degree at most $d$ is a Borel graph $G=(X,E)$ such that every vertex has degree at most $d$ and
\begin{equation}\label{eq:mass-transport}
 \int_A e(x,B)\,d\nu(x)=\int_B e(x,A)\,d\nu(x)
\end{equation}
for all measurable sets $A,B\subseteq X$, where $e(x,S)$ denotes the number of neighbours of $x$ that belong to $S$.
\end{definition}

Condition~\eqref{eq:mass-transport} is the mass-transport principle.  It says that the expected number of edges sent from $A$ to $B$ equals the expected number sent from $B$ to $A$.  Finite graphs with the uniform measure on their vertex sets are examples of graphings.

We shall use the standard measurable structure of bounded-degree Borel graphs.  In particular, by Lusin--Novikov uniformization \cite{KechrisCDST} one may measurably enumerate neighbours locally; consequently finite-radius balls of measurable sets are measurable, and for every measurable finite coloring $f$ the local type map $x\mapsto\gamma_{r,G,f}(x)$ is measurable.  These standard facts will be used without further comment.

For a graphing $G=(X,\nu,E)$, a $k$-coloring is a measurable map $f\colon X\to[k]$.  The objects $\tau_r(G,f)$ and $Q_{k,r}(G)$ are defined exactly as in the finite case, with the root sampled according to $\nu$.  We shall use the closed sets
\[
 \calQ_{k,r}(G)=\overline{Q_{k,r}(G)}\subseteq\calP(\calG_{d,k,r}).
\]

\begin{definition}[Local-global convergence for graphings]
A sequence $(G_n)_{n\geq 1}$ of graphings of maximum degree $d$ is \emph{local-global convergent} if, for every $k\geq 1,r\geq 0$, the closed sets $\calQ_{k,r}(G_n)$ converge in Hausdorff distance.
\end{definition}

\begin{definition}[Local-global comparison]
For graphings $G$ and $G'$, write $G\preceq G'$ if
\[
 \calQ_{k,r}(G)\subseteq \calQ_{k,r}(G')
\]
for every $k\geq 1,r\geq 0$.  We say that $G$ and $G'$ are \emph{local-global equivalent} if equality holds for every $k,r$.
\end{definition}

The following compactness theorem of Hatami, Lov\'asz and Szegedy \cite{HLSz} will be used as a black box.

\begin{theorem}[Local-global compactness]\label{thm:HLS}
For fixed maximum degree $d$, the space of local-global equivalence classes of graphings of degree at most $d$ is compact and metrizable.  Equivalently, every sequence of degree-$d$ graphings has a local-global convergent subsequence, and every local-global convergent sequence has a graphing limit $G$ such that, for every $k\geq 1,r\geq 0$,
\[
 \calQ_{k,r}(G_n) \longrightarrow \calQ_{k,r}(G)
\]
in Hausdorff distance.
\end{theorem}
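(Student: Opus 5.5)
Theorem~\ref{thm:HLS} is quoted from \cite{HLSz} as a black box, but here is how I would prove it. There are two parts: compactness of the space of closed sets, and realisation of the limit by a graphing.

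\emph{Convergent subsequence.} For each pair $(k,r)$ the set $\calP(\calG_{d,k,r})$ is a finite-dimensional simplex, hence compact. The closed subsets of a compact metric space form a compact space under the Hausdorff metric (Blaschke). Since the pairs $(k,r)$ are countable, a diagonal argument gives, for any sequence $(G_n)$, a subsequence along which every $\calQ_{k,r}(G_n)$ converges, say to a closed set $L_{k,r}$. Metrizability follows by defining
\[
 \delta(G,G')=\sum_{k,r}2^{-k-r}\,d_H\bigl(\calQ_{k,r}(G),\calQ_{k,r}(G')\bigr).
\]
This is a metric on local-global equivalence classes, and its topology is that of local-global convergence. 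Compactness of the quotient space therefore reduces to the second part: showing that the limit $(L_{k,r})$ is realised by a graphing.

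\emph{Constructing the limit graphing.} For each $n$ and each $k$, choose a countable family of colorings $f^{(n)}_{k,1},f^{(n)}_{k,2},\dots$ whose statistics $\tau_r$ are dense in $Q_{k,r}(G_n)$ for every $r$; this is possible because each simplex is separable. Encode all of them at once as a single label
\[
 x\mapsto \bigl(f^{(n)}_{k,j}(x)\bigr)_{k,j}\in K=\prod_{k,j}[k],
\]
where $K$ is a compact metrizable space. Each $G_n$ now yields a measure-preserving ($K$-labelled) random rooted graph of degree at most $d$, i.e.\ a unimodular measure $\mu_n$ on the compact space of rooted $K$-labelled graphs. After passing to a further subsequence, $\mu_n$ converges weakly to some $\mu$, and weak limits of unimodular measures are unimodular. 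Let $G$ be the graphing whose vertex space is the space of rooted $K$-labelled graphs with measure $\mu$, with edges given by moving the root; the mass-transport principle for $G$ is exactly unimodularity of $\mu$.

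\emph{Verifying $\calQ_{k,r}(G)=L_{k,r}$.}
\begin{enumerate}[label=(\roman*)]
\item For $L_{k,r}\subseteq\calQ_{k,r}(G)$: each coordinate coloring $f_{k,j}$ of $K$ is a continuous function on the limit space. Hence $\tau_r(G,f_{k,j})$ is the limit of $\tau_r(G_n,f^{(n)}_{k,j})$, and density of the chosen families gives this inclusion.
\item For $\calQ_{k,r}(G)\subseteq L_{k,r}$: take an arbitrary measurable coloring $g$ of $G$. First approximate $g$ in $\mu$-measure by a coloring that depends only on the labels in a finite ball around the root, reading finitely many coordinates of $K$ and the graph structure up to radius $R$. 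Such a rule is continuous off a null boundary set and can be evaluated on $G_n$ verbatim. Weak convergence of $\mu_n$ then shows that its statistics are close to $\tau_r(G,g)$, and these statistics on $G_n$ lie in $Q_{k,r}(G_n)$.
\end{enumerate}

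\emph{Main obstacle.} I expect step (ii) to be the hardest. The difficulty is approximating an arbitrary measurable coloring by "local, finitely-labelled'' rules so that the radius-$r$ statistics change by little. Changing $g$ on a set of measure $\varepsilon$ perturbs $\tau_r$ by at most $(1+d+\dots+d^r)\varepsilon$, which is harmless. The real issue is that the $\sigma$-algebra generated by finite-radius label patterns must generate everything up to null sets. This holds because the vertex space of $G$ is the space of rooted labelled graphs itself, whose Borel structure is generated by exactly these cylinder events. I would therefore build the graphing on that canonical space, rather than on some abstract model, precisely to make this step work.
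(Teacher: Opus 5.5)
The paper does not prove Theorem~\ref{thm:HLS}. It imports it from \cite{HLSz} as a black box, so there is no internal argument to compare with.

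Judged on its own, your outline takes the standard route: a local limit of labelled graphs, realised on the canonical space of rooted labelled graphs. The Blaschke/diagonal part and the metric are fine. Your treatment of step (ii) by cylinder approximation is the right idea. Two steps, however, fail as written.

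\textbf{Step (i): the indices are not uniform in $n$.} You choose the dense families separately for each $n$. So the index $j_n$ for which $\tau_r(G_n,f^{(n)}_{k,j_n})$ is close to a given $p\in L_{k,r}$ may tend to infinity, and then no fixed coordinate $f_{k,j}$ of the limit approximates $p$. For instance, nothing prevents you from making the first $n$ colorings in the family for $G_n$ constant. Then every coordinate is $\mu$-a.s.\ constant, and (i) only yields constant colorings.

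You need an indexing independent of $n$. For each $k,r$ and $m\geq 1$, fix a finite $1/m$-net $p_1,\dots,p_M$ of $\calP(\calG_{d,k,r})$. On every $G_n$, let the coordinate $(k,r,m,i)$ be a coloring whose $\tau_r$-statistic is within $2/m$ of $p_i$ whenever such a coloring exists (arbitrary otherwise). If $p\in L_{k,r}$ and $d_{\TV}(p,p_i)\leq 1/m$, such colorings exist for all large $n$. Hence $\tau_r(G,f_{k,r,m,i})$ is within $3/m$ of $p$.

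\textbf{Rigidity of the canonical graphing.} Moving the root on the space of rooted $K$-labelled graphs gives a graphing whose radius-$r$ balls are the labelled balls $B^H_r(o)$ only under a condition. For $\mu$-a.e.\ $(H,o)$, distinct vertices $u\neq v$ of $H$ must give non-isomorphic rooted labelled graphs $(H,u)$ and $(H,v)$. Otherwise vertices are identified: a point may have fewer than $\deg_H(o)$ neighbours or a loop, and its radius-$r$ ball in $G$ is only a quotient of $B^H_r(o)$.

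Both (i) and (ii) silently use that $\tau_r(G,\cdot)$ equals $\int F\,d\mu$, where $F$ is the locally constant function computing the colored ball inside $H$. That identity needs these balls to coincide. In the example above, take $G_n$ $d$-regular with girth tending to infinity. Then $\mu$ is the point mass at the rooted $d$-regular tree, and your $G$ is a single point with a loop. Nothing in your choice of labels rules this out.

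The remedy is to add, for every $r$, the separating coloring $s_r$ of Lemma~\ref{lem:separated-coloring} as a coordinate.
\begin{itemize}
\item The event that $B_r(o)$ is rainbow in the $s_r$-coordinate is clopen and has $\mu_n$-measure $1$, hence $\mu$-measure $1$.
\item On the intersection over $r$, any two distinct vertices of $H$ already differ in some root label.
\item That property depends only on the unrooted labelled graph $H$. So it cuts out a conull set invariant under moving the root, and you build $G$ on it.
\end{itemize}
With both repairs, your arguments for (i) and (ii) go through.
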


In particular, in this compact metrizable space, if all subsequential limits of a sequence are represented by local-global equivalent graphings, then the sequence itself is local-global convergent.

We now define star observations. Let $\mathsf S_{d,k}$ be the finite set (of isomorphism classes) of rooted $k$-colored stars of maximum degree at most $d$. Equivalently, an element of $\mathsf S_{d,k}$ is a tuple
\[
 (i,n_1,\ldots,n_k),\qquad i\in[k],\quad n_j\in\N,\quad \sum_{j=1}^k n_j\leq d.
\]
For a graphing $G=(X,\nu,E)$ and a measurable coloring $f\colon X\to[k]$, let
\[
 \operatorname{st}_{G,f}(x)\in\mathsf S_{d,k}
\]
be the star with root color $f(x)$ and with $n_i=|\{y\in N_G(x)\,\bigm|\,f(y)=i\}|$.  Let
\[
 \sigma(G,f)
\]
be the distribution of $\operatorname{st}_{G,f}(x)$ for $x\sim\nu$, and define the closed star-statistic set
\[
 \calS_k(G)=\overline{\{\sigma(G,f)\,\bigm|\,f\colon X\to[k]\text{ measurable}\}}
 \subseteq \calP(\mathsf S_{d,k}).
\]
For finite graphs the same definition is used with the uniform measure on vertices.  The set $\calS_k(G)$ is the closed set of all colored degree distributions with $k$ colors.

\begin{lemma}[Continuity of star statistics]\label{lem:star-continuity}
If $G_n$ converges locally-globally to a graphing $G$, then, for every $k\geq 1$,
\[
 \calS_k(G_n)\longrightarrow \calS_k(G)
\]
in Hausdorff distance.
\end{lemma}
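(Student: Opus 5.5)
The plan is to show that star statistics are a continuous image of radius-one local-global statistics. For each $k$ define the map
\[
 \pi\colon \calG_{d,k,1}\to\mathsf S_{d,k}
\]
that sends a rooted colored radius-one ball to the star consisting of its root color together with the color counts of the root's neighbours. This forgets the edges among the neighbours. By definition $\operatorname{st}_{G,f}(x)=\pi(\gamma_{1,G,f}(x))$ for every graphing $G$, every measurable coloring $f$ and every $x$. Hence
\[
 \sigma(G,f)=\pi_*\tau_1(G,f),
\]
where $\pi_*\colon\calP(\calG_{d,k,1})\to\calP(\mathsf S_{d,k})$ denotes the pushforward. Ranging over all measurable $f$, we obtain $\{\sigma(G,f)\}=\pi_*(Q_{k,1}(G))$.

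Next I would check that $\pi_*$ is $1$-Lipschitz for total variation. This holds because
\[
 \tfrac12\sum_{s}\Bigl|\sum_{\alpha\in\pi^{-1}(s)}(\mu(\alpha)-\mu'(\alpha))\Bigr|\le d_{\TV}(\mu,\mu').
\]
A continuous image of the closure is contained in the closure of the image. Since the domain is compact, the image of the closure is compact and therefore closed. Together these give
\[
 \calS_k(G)=\overline{\pi_*(Q_{k,1}(G))}=\pi_*(\calQ_{k,1}(G)).
\]

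Finally, a $1$-Lipschitz map does not increase Hausdorff distance. If every point of $A$ lies within $\varepsilon$ of $B$, then every point of $\pi_*A$ lies within $\varepsilon$ of $\pi_*B$, and the same holds with $A$ and $B$ exchanged. Therefore
\[
 d_H(\calS_k(G_n),\calS_k(G))\le d_H(\calQ_{k,1}(G_n),\calQ_{k,1}(G)).
\]
The right-hand side tends to $0$ by the hypothesis that $G_n$ converges locally-globally to $G$, in the sense of Theorem~\ref{thm:HLS} with $r=1$. This proves the lemma.

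There is no real obstacle here. The only point needing care is the identification of the closures, $\calS_k=\pi_*(\calQ_{k,1})$, and this uses the compactness of the finite-dimensional simplex $\calP(\calG_{d,k,1})$.
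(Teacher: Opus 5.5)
Your proposal is correct and follows essentially the same route as the paper: you push $\calQ_{k,1}$ forward under the forgetful map to the star statistics, use compactness to get $\pi_*(\calQ_{k,1}(G))=\calS_k(G)$, and conclude from Hausdorff convergence at $r=1$. Where the paper just cites the general fact that continuous maps preserve Hausdorff convergence of compact sets, you observe that $\pi_*$ is $1$-Lipschitz for total variation, which gives the explicit bound $d_H(\calS_k(G_n),\calS_k(G))\le d_H(\calQ_{k,1}(G_n),\calQ_{k,1}(G))$; this is a small quantitative sharpening of the same argument.
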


\begin{proof}
Let
\[
 \pi_k\colon \calP(\calG_{d,k,1})\to \calP(\mathsf S_{d,k})
\]
be the affine map induced by forgetting all information in a rooted colored radius-one neighbourhood except the color of the root and the multiset of colors of its neighbours.  Then
\[
 \pi_k(Q_{k,1}(G))=\{\sigma(G,f)\,\bigm|\,f\colon X\to[k]\text{ measurable}\}.
\]
Since $\calP(\calG_{d,k,1})$ is compact and $\pi_k$ is continuous,
\[
 \pi_k(\calQ_{k,1}(G))=\overline{\pi_k(Q_{k,1}(G))}=\calS_k(G),
\]
and similarly for $G_n$.  Continuous maps send Hausdorff-convergent compact subsets of a compact metric space to Hausdorff-convergent compact image subsets.  The local-global convergence of $G_n$ to $G$ therefore implies the asserted convergence of $\calS_k(G_n)$ to $\calS_k(G)$.
\end{proof}

We now state the main theorem in its precise form.

\begin{theorem}[Star theorem]\label{thm:main}
Let $(G_n)_{n\geq 1}$ be a sequence of graphings of maximum degree at most $d$.  Then $(G_n)$ is local-global convergent if and only if, for every $k\geq 1$, the closed sets
\[
 \calS_k(G_n)\subseteq\calP(\mathsf S_{d,k})
\]
converge in Hausdorff distance.
\end{theorem}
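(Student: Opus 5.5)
We first dispose of the easy direction and reduce the converse to a rigidity statement. If $(G_n)$ is local-global convergent, then by Theorem~\ref{thm:HLS} it has a graphing limit $G$, and Lemma~\ref{lem:star-continuity} gives $\calS_k(G_n)\to\calS_k(G)$ for every $k$. For the converse, assume that all $\calS_k(G_n)$ converge, and let $G$ and $G'$ be limits of two local-global convergent subsequences (they exist by Theorem~\ref{thm:HLS}). By Lemma~\ref{lem:star-continuity}, $\calS_k(G)=\calS_k(G')$ for all $k$. By the remark after Theorem~\ref{thm:HLS}, it therefore suffices to prove the following: \emph{if $\calS_k(G)\subseteq\calS_k(G')$ for all $k$, then $\calQ_{k,r}(G)\subseteq\calQ_{k,r}(G')$ for all $k,r$.} Applying this symmetrically shows that $G$ and $G'$ are local-global equivalent, hence the whole sequence converges.

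To prove the inclusion, fix $k,r$, a measurable $f\colon X\to[k]$ and $\varepsilon>0$. The plan is to encode $\gamma_{r,G,f}$ into a single finite coloring whose star statistics remember it.
\begin{enumerate}[label=(\roman*)]
\item First take a measurable proper coloring $g\colon X\to[M]$ of the distance-$(2r+2)$ power graph of $G$. Such a coloring exists with finitely many colors for bounded-degree Borel graphs (Kechris--Solecki--Todorcevic), so labels are distinct inside every ball of radius $r+1$.
\item Then define iterated colorings. Set $F_0=(f,g)$, and let $F_{j+1}(x)$ be $F_0(x)$ together with the set $\{F_j(y)\,:\,y\in N_G(x)\}$; this is a set rather than a multiset because the labels are distinct.
\item Let $F=F_{r+1}$, which takes values in some finite palette $K$. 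Because labels are distinct within radius $r+1$, the value $F(x)$ determines the labelled induced ball of radius $r$ around $x$, edges between distance-$r$ vertices included (these are read off from the $F_1$-data of the distance-$r$ vertices). Forgetting $g$ then recovers $\gamma_{r,G,f}(x)$ via a fixed decoding map $D$.
\end{enumerate}

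Now $\sigma(G,F)\in\calS_K(G)\subseteq\calS_K(G')$, so there are measurable $F'\colon X'\to K$ with $\sigma(G',F')$ arbitrarily close to $\sigma(G,F)$. Call a star \emph{consistent} if the neighbour-set claimed by the root colour equals the set of the neighbours' colours projected one level down, and the root's claimed labelled ball has distinct labels. The measure $\sigma(G,F)$ is supported on consistent stars, so the set $B$ of vertices of $G'$ with inconsistent stars has measure at most $\varepsilon$. By the mass-transport principle, the set of vertices within distance $r+1$ of $B$ has measure at most $(1+d+\dots+d^{r+1})\varepsilon$. Put $f'=$ the first coordinate of $F'$. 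For $x$ outside this set we want $D(F'(x))=\gamma_{r,G',f'}(x)$. Granting this, $\tau_r(G',f')$ is within $O(d^{r+1}\varepsilon)$ of the image of $\sigma(G',F')$ under $D$, which in turn is close to $\tau_r(G,f)$. Hence $\tau_r(G,f)\in\calQ_{k,r}(G')$.

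The main obstacle is exactly this decoding claim: when every star in the $(r+1)$-ball around $x$ is consistent, the natural label map from the true induced ball of $x$ in $G'$ to the ball encoded in $F'(x)$ must be an isomorphism of rooted coloured graphs. I would prove it by induction on the radius. Consistency gives, at each vertex, a bijection between its actual neighbours and the claimed neighbours, so the label map is a local isomorphism on stars out to radius $r$. Distinctness of the claimed labels then forces the map to be injective on the ball. Adjacency between distance-$r$ vertices is checked through their own consistent $F_1$-components. If distinctness cannot be forced this way, I would enlarge the encoding, for example by including degrees or the full claimed ball in every level, so that consistency is checkable at radius one.
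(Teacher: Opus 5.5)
\textbf{There is a genuine gap, at exactly the decoding claim you flag.} Radius-one consistency at every vertex of the $(r+1)$-ball only makes the label map from the true ball of $x$ in $G'$ to the encoded ball a local isomorphism on stars, that is, a covering-type map. Distinctness of the \emph{claimed} labels says nothing about two \emph{different} vertices of $G'$ carrying the same label, so injectivity does not follow. In fact the one-sided statement you reduce to is false, so no repair within that reduction can work.

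Here is a counterexample. Take $G=C_3$ with the uniform measure, and let $G'$ be the irrational rotation graphing $x\sim x\pm\alpha$ on $\mathbb{R}/\mathbb{Z}$.
\begin{enumerate}[label=(\roman*)]
\item Copy any coloring of $C_3$ $3$-periodically up the levels of a tall Rokhlin tower. All but a small fraction of points then see exactly the stars of $C_3$, so $\calS_k(C_3)\subseteq\calS_k(G')$ for every $k$.
\item In particular, your encoding $F$ copied this way passes every radius-one consistency test deep inside the tower.
\item Yet $\calQ_{1,1}(C_3)$ is the point mass on the triangle, while $\calQ_{1,1}(G')$ is the point mass on the rooted path of length two.
\end{enumerate}
Since the construction works for every finite palette, your fallback of enlarging the encoding (degrees, full claimed balls at every level) cannot help. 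No argument that uses only $\calS_k(G)\subseteq\calS_k(G')$ can succeed.

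\textbf{The missing idea is a global invariant that exploits both inclusions.} The paper uses closed-walk counts.
\begin{enumerate}[label=(\roman*)]
\item On the good set, model walks lift uniquely to graphing walks (Lemma~\ref{lem:walk-lifting}).
\item Closed graphing walks pull back to closed model walks. So the true ball has at most as many closed walks of each length $t\le r$ as the claimed ball (Lemma~\ref{lem:closed-walk-inequality}).
\item Averaging shows that $c_t$ of the graphing carrying the approximating coloring is at most $c_t$ of the graphing carrying the true coloring, up to $\varepsilon$.
\item Running this in both directions, which needs $\calS_k(G)=\calS_k(G')$ and not just one inclusion, gives $c_t(G)=c_t(G')$ (Lemma~\ref{lem:closed-walks-equal}).
\item Hence the pointwise inequality is an equality off a small set. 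This rules out unfolding, and Lemma~\ref{lem:isomorphism-from-closed-walks} (with $R+1<r/3$) turns it into the ball isomorphism you want.
\end{enumerate}
In the example above, $c_3(C_3)=2>0=c_3(G')$, which is precisely what prevents $\calS_k(G')\subseteq\calS_k(C_3)$.

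The rest of your setup is fine and parallels the paper: the separating labels, the encoding, the bad set of measure $O(\varepsilon)$, and the bound on its $(r+1)$-neighbourhood. What must change is the one-sided reduction and the step ``distinctness forces injectivity''; replace them with this two-sided closed-walk argument.
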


The forward implication follows from Lemma~\ref{lem:star-continuity}.  The rest of the paper proves the converse.

We shall need one standard measurable coloring fact.

\begin{lemma}[Separated colorings]\label{lem:separated-coloring}
Let $G=(X,\nu,E)$ be a graphing of maximum degree at most $d$, and let $r\geq 1$.  Then there is a measurable coloring
\[
 s\colon X\to[K_{d,r}]
\]
with
\[
 K_{d,r}=d^{2r+1}+1
\]
such that any two distinct vertices lying in a common radius-$r$ ball have distinct colors.
\end{lemma}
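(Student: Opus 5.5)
The plan is to reduce the statement to the classical measurable greedy coloring of bounded-degree Borel graphs, applied to the ``distance-$2r$ power'' of $G$. Define the auxiliary graph $G^{(2r)}$ on $X$ by joining distinct $x,y$ whenever $\operatorname{dist}_G(x,y)\le 2r$. Two distinct vertices lie in a common radius-$r$ ball exactly when there is a $z$ with $\operatorname{dist}(z,x)\le r$ and $\operatorname{dist}(z,y)\le r$. This holds if and only if $\operatorname{dist}(x,y)\le 2r$: one direction is the triangle inequality, and for the other one takes $z$ to be a midpoint of a shortest path. So a proper coloring of $G^{(2r)}$ is exactly what is required.

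First I would check that $G^{(2r)}$ is a Borel graph of bounded degree. Measurability follows from the standard facts quoted above: use Lusin--Novikov to enumerate neighbours measurably, and iterate to express the relation ``$\operatorname{dist}\le 2r$'' as a finite union of Borel sets. For the degree bound, the number of vertices at distance between $1$ and $2r$ from $x$ is at most
\[
\sum_{j=1}^{2r} d^j \le d^{2r+1}
\]
for $d\ge 2$; in fact $d(d^{2r}-1)/(d-1)\le d^{2r+1}-1$. Hence $G^{(2r)}$ has maximum degree $D\le d^{2r+1}$, so $D+1\le K_{d,r}$ colors suffice for a greedy argument.

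The main step is the Kechris--Solecki--Todorcevic theorem: every Borel graph of maximum degree $D$ has a Borel proper coloring with $D+1$ colors. I would cite it from \cite{KechrisCDST}. If a self-contained sketch is wanted, it runs as follows.
\begin{itemize}
\item Fix a countable base $(U_m)$ of the Polish topology.
\item Colour $X$ with countably many colours by a Borel rule that is proper: let $x$ get the least $m$ with $x\in U_m$ and no neighbour of $x$ in $U_m$. Such an $m$ exists because $x$ has finitely many neighbours and $X$ is Hausdorff, and the set of such $x$ is Borel by Lusin--Novikov.
\item Run through the resulting Borel independent sets $A_1,A_2,\dots$ in order. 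At stage $m$, give every vertex of $A_m$ the least colour in $[D+1]$ not used by its already-coloured neighbours. Since $A_m$ is independent, the choices within a stage do not conflict. Each stage is Borel, and the limit is defined everywhere.
\end{itemize}
The resulting map $s$ is measurable, and it is proper on $G^{(2r)}$ with values in $[D+1]\subseteq[K_{d,r}]$. This gives the required separation.

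The only potentially delicate point is measurability of $G^{(2r)}$ and of the set $\{x\in U_m:\ N(x)\cap U_m=\emptyset\}$. I expect both to be routine consequences of the measurable enumeration of neighbours mentioned before the lemma. No use of the measure $\nu$ or of the mass-transport principle is needed: the coloring is even Borel, not merely $\nu$-measurable.
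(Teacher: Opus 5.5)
Your proposal is correct and follows the paper's own proof: pass to the distance-$2r$ power graph $G^{(2r)}$, bound its maximum degree by $d^{2r+1}$, and apply the Kechris--Solecki--Todorcevic $(\Delta+1)$-coloring theorem. Your self-contained sketch of that theorem (a countable proper coloring from a countable base, then stagewise greedy recoloring) is correct; the paper simply cites the theorem instead.
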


\begin{proof}
Form the Borel graph $G^{(2r)}$ on $X$ in which two distinct points are adjacent if their distance in $G$ is at most $2r$.  If two vertices lie in a common radius-$r$ ball, then their distance is at most $2r$.  The maximum degree of $G^{(2r)}$ is at most the crude bound $d^{2r+1}$.  By the bounded-degree Borel coloring theorem of Kechris--Solecki--Todorcevic \cite{KST}, a Borel graph of maximum degree $\Delta$ has a Borel proper coloring with at most $\Delta+1$ colors.  Applying this theorem to $G^{(2r)}$ gives a measurable proper coloring with at most $d^{2r+1}+1$ colors.  Such a coloring separates all vertices lying in a common radius-$r$ ball of $G$.
\end{proof}

\section{Proof of the star theorem}\label{sec:proof}

Throughout this section, all graphings have maximum degree at most the fixed integer $d\geq 2$.  For a graphing $G=(X,\nu,E)$ and $x\in X$, write $N_G(x)$ for the set of neighbours of $x$ and $B_r^G(x)$ for the radius-$r$ ball around $x$.  We omit the superscript when the graphing is clear.

The proof has three steps.  First, we color each vertex by a proposed finite radius-$r$ model and use star statistics to test whether these proposed models agree across edges.  Second, on regions where they agree, walks in the proposed model lift canonically to walks in the graphing.  Finally, equality of closed-walk counts prevents the lift from folding the model, and forces the proposed bounded neighbourhood to be the true one.

Let $W_t^G(x)$ be the set of walks of length $t$ in $G$ starting at $x$, and let $W_t^{0,G}(x)$ be the subset of closed walks.  Define
\[
 c_t(G)=\E_{x\sim\nu}\, |W_t^{0,G}(x)|.
\]
When the graphing is clear, we write $W_t(x)$ and $W_t^0(x)$.

\subsection{Consistent local models}

Let $\calG^*_{d,k,r}\subseteq\calG_{d,k,r}$ denote the set of rooted $k$-colored radius-$r$ graphs in which all vertices have distinct colors. Suppose that $G=(X,\nu,E)$ is a graphing and that
\[
 m\colon X\to\calG^*_{d,k,r}
\]
is a measurable function. We think of $m(x)$ as a rooted colored radius-$r$ model for the ball $B_r^G(x)$ around the vertex $x$. In general, we do not assume that $m(x)$ (without the coloring) is isomorphic to $B_r^G(x)$. Our goal is to identify local properties of $m$, checkable from star observations, that force such an isomorphism.

\begin{definition}[Consistency]\label{def:consistency}
Assume that $r\geq 1$. We say that $m$ is \emph{consistent at $x\in X$} if there is a bijection
\[
 b_x\colon N_{m(x)}(o)\to N_G(x)
\]
from the neighbours of the root in the model $m(x)$ to the neighbours of $x$ in $G$, such that for every $v\in N_{m(x)}(o)$ the rooted colored graph
\[
 \gamma_{r-1,m(x)}(v)
\]
is isomorphic to
\[
 \gamma_{r-1,m(b_x(v))}(o).
\]
Because all vertices of $m(x)$ have distinct colors, the bijection $b_x$ is unique if it exists: the condition forces the root color of $m(b_x(v))$ to be the color of $v$, and the neighbours of the root in $m(x)$ have pairwise distinct colors.
\end{definition}

Thus consistency means that the model assigned to $x$ agrees, one step away from the root, with the models assigned to the actual neighbours of $x$.

\begin{lemma}\label{lem:true-neighbourhoods-consistent}
Let $G=(X,\nu,E)$ be a graphing and let $g\colon X\to[k]$ be a coloring such that every radius-$r$ ball has pairwise distinct colors.  Define
\[
 m(x)=\gamma_{r,G,g}(x)\in\calG^*_{d,k,r}.
\]
Then $m$ is consistent at every $x\in X$.
\end{lemma}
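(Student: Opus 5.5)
The plan is to take for $b_x$ the bijection coming from an identification of $m(x)$ with the true ball. Since $m(x)=\gamma_{r,G,g}(x)$ is the isomorphism class of the induced colored ball $B_r(x)$, fix a representative: the rooted graph $B_r(x)$ itself, with root $x$ and coloring $g$. Because all colors in $B_r(x)$ are distinct, $m(x)\in\calG^*_{d,k,r}$. Moreover, every isomorphism from this representative to any other one is forced on vertices by the colors. So we may identify $m(x)$ with $B_r(x)$. The root's neighbours in $m(x)$ are then exactly $N_G(x)$, and we take $b_x$ to be the identity $N_G(x)\to N_G(x)$ under this identification. It is a bijection because the ball is induced and contains all neighbours of $x$ (here $r\ge 1$).

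It remains to check the isomorphism condition for $v\in N_G(x)$. On one side, $\gamma_{r-1,m(x)}(v)$ is the induced colored ball of radius $r-1$ around $v$ inside the graph $B_r(x)$. On the other side, $\gamma_{r-1,m(v)}(o)$ is the induced colored ball of radius $r-1$ around $v$ inside $B_r(v)$. Both balls are computed with respect to distances in their ambient graphs, so I would show that each equals the induced subgraph of $G$ on $B_{r-1}^G(v)$, colored by $g$.

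The key step is to show that graph distances do not change when we pass from $G$ to the induced subgraph $B_r(x)$, for points near $v$. Take $y$ with $d_G(v,y)=j\le r-1$. A geodesic from $v$ to $y$ has every vertex within distance $j$ of $v$, and hence within distance $1+j\le r$ of $x$. So the whole geodesic lies in $B_r(x)$, and $d_{B_r(x)}(v,y)=j$. Conversely, distances in an induced subgraph are never smaller than distances in $G$. Therefore the radius-$(r-1)$ ball of $v$ inside $B_r(x)$ has vertex set exactly $B^G_{r-1}(v)$. Since $B^G_{r-1}(v)\subseteq B_r(x)$, the induced edges agree as well. The same argument, with $x$ replaced by $v$, handles $B_r(v)$, and there it is trivial because $r-1\le r$.

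Both rooted colored graphs are therefore literally the same object $(G[B^G_{r-1}(v)],v,g)$, and in particular they are isomorphic. The main point requiring care is the distance-preservation step, which is the geodesic-containment argument above. Uniqueness of $b_x$ is already noted in Definition~\ref{def:consistency}, so nothing more is needed, and consistency holds at every $x$.
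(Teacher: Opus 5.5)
Your proof is correct and takes the same route as the paper. Like the paper, you identify $m(x)$ with the actual ball $B_r(x)$ via the color-forced isomorphism, take $b_x$ to be the resulting identity on $N_G(x)$, and observe that the radius-$(r-1)$ neighbourhoods of $v$ inside $B_r(x)$ and inside $B_r(v)$ coincide. Your geodesic-containment argument, which shows that both equal $G[B^G_{r-1}(v)]$, makes explicit the distance-preservation step that the paper asserts only with the word ``precisely''.
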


\begin{proof}
The rooted graph $m(x)$ is, by definition, the colored induced radius-$r$ neighbourhood of $x$.  Identify $m(x)$ with the actual ball $B_r(x)$ by the unique color-preserving isomorphism.  Under this identification, neighbours of the root correspond exactly to neighbours of $x$.  If $v$ is such a neighbour and $y$ is the corresponding neighbour of $x$, then the induced radius-$(r-1)$ neighbourhood of $v$ inside $m(x)$ is precisely the induced radius-$(r-1)$ neighbourhood of the root in $m(y)$.  This is exactly the required consistency condition.
\end{proof}

To handle small exceptional sets in graphings, we shall use the following simple estimate, which says that small sets in a bounded-degree graphing have small bounded-radius neighbourhoods. 

\begin{lemma}\label{lem:neighbourhood-measure}
Let $G=(X,\nu,E)$ be a graphing of maximum degree at most $d$, and let $U\subseteq X$ be measurable.  Then
\[
 \nu(B_r(U))\leq (1+d+\cdots+d^r)\nu(U).
\]
\end{lemma}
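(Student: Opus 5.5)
The plan is to prove the estimate by induction on $r$, with the mass-transport principle \eqref{eq:mass-transport} driving the one-step case. Write $B_r(U)$ for the set of points at distance at most $r$ from $U$. Then
\[
 B_{r+1}(U)=B_r(U)\cup N\bigl(B_r(U)\bigr),
\]
where $N(S)$ denotes the set of points having a neighbour in $S$. So it suffices to establish the one-step bound
\[
 \nu(N(S))\leq d\,\nu(S)
\]
for every measurable $S$. By the standard measurability facts recalled in Section~\ref{sec:local-global}, $N(S)$ is measurable.

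For the one-step bound, apply \eqref{eq:mass-transport} with $A=N(S)$ and $B=S$. Every $x\in N(S)$ satisfies $e(x,S)\geq 1$, while every $x\in S$ satisfies $e(x,N(S))\leq d$. Therefore
\[
 \nu(N(S))\leq\int_{N(S)}e(x,S)\,d\nu(x)=\int_S e(x,N(S))\,d\nu(x)\leq d\,\nu(S).
\]

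To get the stated geometric sum rather than the weaker bound $(1+d)^r\nu(U)$, I would not apply this bound to all of $B_r(U)$. Instead, consider the spheres $S_j=B_j(U)\setminus B_{j-1}(U)$ with $S_0=U$. Any point of $S_{j+1}$ has a neighbour in $S_j$, because a shortest path back to $U$ passes through $S_j$. Hence $S_{j+1}\subseteq N(S_j)$, and the one-step bound gives $\nu(S_{j+1})\leq d\,\nu(S_j)$. Inductively $\nu(S_j)\leq d^j\nu(U)$. Since $B_r(U)$ is the disjoint union $S_0\cup\cdots\cup S_r$, summing gives
\[
 \nu(B_r(U))\leq(1+d+\cdots+d^r)\,\nu(U).
\]

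The only point needing care is measurability of $N(S)$ and of the spheres $S_j$, which is covered by the Lusin--Novikov remarks already taken for granted in the paper. Everything else is the single application of the mass-transport principle above.
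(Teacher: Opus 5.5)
Your proof is correct, but it takes a different route from the paper's.

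The paper works with the adjacency operator $A_G$. It upgrades \eqref{eq:mass-transport} to self-adjointness of $A_G$ on $L^2(X,\nu)$ by linearity and approximation, and sets $M=I+A_G+\cdots+A_G^r$. It uses the pointwise bound $1_{B_r(U)}\leq M1_U$, since every point of $B_r(U)$ ends some walk of length at most $r$ started in $U$. Then, in one line, $\nu(B_r(U))\leq\langle M1_U,1_X\rangle=\langle 1_U,M1_X\rangle\leq(1+d+\cdots+d^r)\nu(U)$, because $M1_X$ counts walks of length at most $r$.

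You instead use \eqref{eq:mass-transport} only in its literal form for indicator sets, once per layer. You get the geometric sum from the sphere decomposition, where the shortest-path argument for $S_{j+1}\subseteq N(S_j)$ is sound. Your remark that iterating the one-step bound over whole balls would only give $(1+d)^r$ is exactly the right reason to pass to spheres.

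What your version buys is elementarity. It never leaves indicator functions, so it needs no approximation step to apply the symmetry to non-indicator functions such as $A_G1_U$. It also sharpens easily: for $j\geq1$ each point of $S_j$ already has a neighbour in $S_{j-1}$, so $\nu(S_{j+1})\leq\int_{S_j}e(x,S_{j+1})\,d\nu(x)\leq(d-1)\nu(S_j)$.

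What the paper's version buys is a single global computation with no layer bookkeeping. It is phrased in walk counts, the same language as the closed-walk arguments later in the paper.
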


Note that in this lemma $B_r(U):=\cup_{x\in U}B_r(x)$.

\begin{proof}
It is enough to count how many vertices can be reached from $U$ by walks of length at most $r$.  Let $A_G$ be the adjacency operator of the graphing,
\[
 (A_Gf)(x)=\sum_{y\in N_G(x)} f(y),
\]
and set
\[
 M=I+A_G+\cdots+A_G^r.
\]
The operator $A_G$ is bounded on $L^2(X,\nu)$ with norm at most $d$.  The mass-transport identity gives
\[
 \langle A_G1_U,1_V\rangle=\langle 1_U,A_G1_V\rangle
\]
for measurable sets $U,V$, and hence, by linearity and approximation, $A_G$ is self-adjoint on $L^2(X,\nu)$.  Thus $M$ is self-adjoint.  Since $1_{B_r(U)}\leq M1_U$ pointwise,
\[
 \nu(B_r(U))\leq \langle M1_U,1_X\rangle
 =\langle 1_U,M1_X\rangle.
\]
For every $x$, the value $(M1_X)(x)$ is the number of walks of length at most $r$ starting at $x$, which is at most
\[
 1+d+\cdots+d^r.
\]
The claim follows.
\end{proof}

\begin{lemma}\label{lem:consistency-from-stars}
For every $d,k,r$ and every $\varepsilon>0$ there exists $\delta>0$ with the following property.  Let $G=(X,\nu,E)$ and $G'=(X',\nu',E')$ be graphings, and let
\[
 m\colon X\to\calG^*_{d,k,r},\qquad m'\colon X'\to\calG^*_{d,k,r}
\]
be measurable maps.  Assume that $m'$ is consistent at every point of $X'$ and that, after identifying the finite set $\calG^*_{d,k,r}$ with a color set,
\[
 d_{\TV}\bigl(\sigma(G,m),\sigma(G',m')\bigr)\leq\delta.
\]
Then, for a random $x\in X$, with probability at least $1-\varepsilon$, the map $m$ is consistent at every vertex of $B_r(x)$.
\end{lemma}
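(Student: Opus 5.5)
Consistency of $m$ at a point $x$ can be read off from the star $\operatorname{st}_{G,m}(x)$. The star records the root color $m(x)$ and the multiset of colors $m(y)$ for $y\in N_G(x)$. Consistency at $x$ asks for a bijection $b_x$ between $N_{m(x)}(o)$ and $N_G(x)$ with $\gamma_{r-1,m(x)}(v)\cong\gamma_{r-1,m(b_x(v))}(o)$. Because the vertices of $m(x)$ carry distinct colors, $b_x$ is forced to send $v$ to the neighbour whose model has root color equal to the color of $v$. So the existence of $b_x$ depends only on the root model $m(x)$ and the multiset $\{m(y)\,:\,y\in N_G(x)\}$, that is, only on the star. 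Accordingly we define
\[
 \mathsf C\subseteq\mathsf S_{d,\calG^*_{d,k,r}}
\]
to be the set of stars, with colors in the finite set $\calG^*_{d,k,r}$, that satisfy the consistency condition. Then $m$ is consistent at $x$ if and only if $\operatorname{st}_{G,m}(x)\in\mathsf C$.

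Next we compare the two graphings. Since $m'$ is consistent everywhere, $\sigma(G',m')$ is supported on $\mathsf C$. Let $U=\{x\in X\,:\,\operatorname{st}_{G,m}(x)\notin\mathsf C\}$, the set where $m$ fails to be consistent. By the definition of total variation distance,
\[
 \nu(U)=\sigma(G,m)(\mathsf C^c)\le \sigma(G',m')(\mathsf C^c)+d_{\TV}\bigl(\sigma(G,m),\sigma(G',m')\bigr)\le\delta .
\]
Measurability of $U$ follows from the measurability of the star map.

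Finally we pass from single points to balls. The event that $m$ is inconsistent at some vertex of $B_r(x)$ is exactly the event $x\in B_r(U)$, because the graph distance is symmetric. Lemma~\ref{lem:neighbourhood-measure} gives
\[
 \nu(B_r(U))\le(1+d+\cdots+d^r)\,\nu(U)\le(1+d+\cdots+d^r)\,\delta .
\]
Choosing $\delta=\varepsilon/(1+d+\cdots+d^r)$ therefore proves the lemma. This $\delta$ depends only on $d,r,\varepsilon$.

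The only step that needs care is the first one: checking that consistency really is a function of the star alone. The point to verify is that knowing the multiset of neighbour models suffices, and that no information about which neighbour is which is needed. This holds because distinct colors make $b_x$ unique, and because the star (being a multiset) sees when two neighbours carry models with the same root color, in which case no bijection can exist. Everything after that step is a direct estimate.
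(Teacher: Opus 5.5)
Your proposal is correct and follows the paper's proof essentially step for step. Like the paper, you observe that consistency at $x$ is a property of the star $\operatorname{st}_{G,m}(x)$ alone, bound the measure of the inconsistent set $U$ by $\delta$ because $\sigma(G',m')$ gives zero mass to the bad star types, and then apply Lemma~\ref{lem:neighbourhood-measure} with $\delta\le\varepsilon/(1+d+\cdots+d^r)$. Your extra justification that the multiset of neighbour models suffices, including the case where two neighbours carry the same root color, spells out a point the paper states only briefly.
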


\begin{proof}
Let $\mathcal M=\calG^*_{d,k,r}$, regarded as a finite color set.  Consistency at a vertex is determined by the rooted $\mathcal M$-colored star whose root color is $m(x)$ and whose leaf colors are the values of $m$ on the neighbours of $x$: from this finite data one can check whether there is a bijection between the leaves prescribed by the root model and the actual neighbouring model-colors satisfying Definition~\ref{def:consistency}.  Let
\[
 \mathcal B\subseteq \mathsf S_{d,\mathcal M}
\]
be the finite set of star types for which this check fails.  Since $m'$ is everywhere consistent, $\sigma(G',m')(\mathcal B)=0$.  Therefore
\[
 \nu(U)=\sigma(G,m)(\mathcal B)\leq d_{\TV}(\sigma(G,m),\sigma(G',m'))\leq\delta,
\]
where
\[
 U=\{x\in X\,\bigm|\, m\text{ is not consistent at }x\}.
\]
Choose $\delta\leq \varepsilon/(1+d+\cdots+d^r)$.  By Lemma~\ref{lem:neighbourhood-measure},
\[
 \nu(B_r(U))\leq (1+d+\cdots+d^r)\nu(U)\leq\varepsilon.
\]
If $x\notin B_r(U)$, then no vertex of $B_r(x)$ belongs to $U$, so $m$ is consistent at every vertex of $B_r(x)$.  This proves the claim.
\end{proof}

\subsection{Lifting walks from models to the graphing}

The next step shows that, on a ball where the assigned models are consistent, walks in the model can be lifted uniquely to walks in the graphing.

\begin{lemma}\label{lem:walk-lifting}
Let $G=(X,\nu,E)$ be a graphing and let $m\colon X\to\calG^*_{d,k,r}$ be measurable.  Suppose that $m$ is consistent at every vertex of $B_r(x)$.  Then, for every $0\leq t\leq r$, there is a unique bijection
\[
 h_t\colon W_t^{m(x)}(o)\to W_t^G(x)
\]
with the following property.  If
\[
 (v_0,v_1,\ldots,v_t)\in W_t^{m(x)}(o)
\]
and
\[
 h_t(v_0,\ldots,v_t)=(x_0,x_1,\ldots,x_t),
\]
then $x_0=x$, and for every $0\leq i\leq t$,
\[
 \gamma_{r-i,m(x)}(v_i)\cong \gamma_{r-i,m(x_i)}(o)
\]
as rooted colored graphs.  The maps $h_t$ are coherent under restriction to initial subwalks.
\end{lemma}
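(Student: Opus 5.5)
We argue by induction on $t$, constructing $h_t$ one step at a time from the consistency bijections $b_y$ of Definition~\ref{def:consistency}. For $t=0$ the only walk is $(o)$, and we set $h_0(o)=(x)$. The required isomorphism $\gamma_{r,m(x)}(o)\cong\gamma_{r,m(x)}(o)$ is trivial, and uniqueness is forced by $x_0=x$.

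For the inductive step, assume $h_t$ has been built for some $t<r$. Take a walk $(v_0,\ldots,v_t)$ with image $(x_0,\ldots,x_t)$. Since $t<r$, the vertex $x_t$ lies in $B_t(x)\subseteq B_r(x)$, so $m$ is consistent at $x_t$ and $b_{x_t}\colon N_{m(x_t)}(o)\to N_G(x_t)$ exists. By induction there is an isomorphism
\[
\phi\colon\gamma_{r-t,m(x)}(v_t)\to\gamma_{r-t,m(x_t)}(o),
\]
and it is unique because all colors in $m(x)$ are distinct. Since $r-t\geq 1$, $\phi$ restricts to a bijection from the neighbours of $v_t$ in $m(x)$ onto $N_{m(x_t)}(o)$. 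Here we use that neighbours of $v_t$ lie within distance $r-t$ of $v_t$ and that $\gamma$ is induced; a neighbour of $v_t$ in $m(x)$ is at distance at most $t+1\leq r$ from $o$, so it appears in $m(x)$.

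We then define the lift of the extended walk by
\[
x_{t+1}=b_{x_t}(\phi(v_{t+1})).
\]
Every extension $(v_0,\ldots,v_{t+1})$ gets exactly one extension of the image walk, and every neighbour of $x_t$ arises from exactly one $v_{t+1}$. Hence, by the induction hypothesis, $h_{t+1}$ is a bijection, and it is coherent with $h_t$ by construction.

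It remains to check the isomorphism condition at step $t+1$. Inside $\gamma_{r-t,m(x)}(v_t)$, the ball of radius $r-t-1$ around $v_{t+1}$ is the ball in $m(x)$, because it is contained in the radius-$(r-t)$ ball around $v_t$ and both are induced. The isomorphism $\phi$ carries it to $\gamma_{r-t-1,m(x_t)}(\phi(v_{t+1}))$, which is in turn contained in $\gamma_{r-t,m(x_t)}(o)$. Consistency at $x_t$ then gives
\[
\gamma_{r-1,m(x_t)}(\phi(v_{t+1}))\cong\gamma_{r-1,m(x_{t+1})}(o).
\]
Truncating both sides to radius $r-t-1\leq r-1$ yields $\gamma_{r-t-1,m(x)}(v_{t+1})\cong\gamma_{r-(t+1),m(x_{t+1})}(o)$. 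Truncation is compatible with rooted isomorphism, since isomorphisms preserve distances from the root.

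For uniqueness, suppose $h'$ is another family satisfying the property. By induction it agrees with $h$ on walks of length $t$, and its value at step $t+1$ must be a neighbour $y$ of $x_t$ whose model's root color equals the color of $v_{t+1}$. Because the root colors of the models $m(y)$ for $y\in N_G(x_t)$ are the distinct colors of the neighbours of $o$ in $m(x_t)$, this forces $y=x_{t+1}$. Coherence under restriction holds because each lift extends the previous one.

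The main obstacle is the bookkeeping of nested radii. One has to check that the ball of radius $r-t-1$ around $v_{t+1}$ computed in $m(x)$ coincides with the one computed inside the radius-$(r-t)$ ball around $v_t$, and the same on the model side, so that the radius shrinks correctly at each step. Everything else is routine once all the isomorphisms are known to be color-preserving and hence unique.
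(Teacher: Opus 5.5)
Your proposal is correct and follows the paper's proof: induction on the walk length, extending the lifted walk by $x_{t+1}=b_{x_t}(\phi(v_{t+1}))$, where $\phi$ is the unique colour-preserving isomorphism from the induction hypothesis, and getting bijectivity, uniqueness and coherence from the uniqueness of colour-preserving isomorphisms and consistency bijections. Your explicit check that the radius-$(r-t-1)$ balls nest correctly inside the radius-$(r-t)$ balls fills in the step the paper only says ``follows directly from the definition of consistency.''
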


\begin{proof}
We argue by induction on $t$.  For $t=0$, both walk sets contain only the trivial walk and the assertion is clear.

Assume the statement known for walks of length $t-1$.  Let
\[
 (v_0,\ldots,v_{t-1},v_t)
\]
be a walk in $m(x)$, and write
\[
 h_{t-1}(v_0,\ldots,v_{t-1})=(x_0,\ldots,x_{t-1}).
\]
By the induction hypothesis,
\[
 \gamma_{r-t+1,m(x)}(v_{t-1})\cong \gamma_{r-t+1,m(x_{t-1})}(o).
\]
Since all vertices in the relevant model have distinct colors, this isomorphism is unique.  Let $\kappa$ be this unique isomorphism.  The last vertex $v_t$ is a neighbour of $v_{t-1}$, so $\kappa(v_t)$ is a neighbour of the root in $m(x_{t-1})$.  By consistency at $x_{t-1}$, the canonical bijection
\[
 b_{x_{t-1}}\colon N_{m(x_{t-1})}(o)\to N_G(x_{t-1})
\]
is defined.  Set
\[
 x_t=b_{x_{t-1}}(\kappa(v_t)).
\]
Then $(x_0,\ldots,x_t)$ is a walk in $G$, and the required rooted-neighbourhood compatibility follows directly from the definition of consistency.

At each extension step, neighbours of the current model vertex are matched bijectively with neighbours of the current graphing vertex.  Hence the construction is bijective on walks of length $t$.  Uniqueness and coherence follow from the uniqueness of the color-preserving isomorphisms and of the consistency bijections.
\end{proof}

\begin{lemma}\label{lem:closed-walk-inequality}
Under the assumptions of Lemma~\ref{lem:walk-lifting}, for every $t\leq r$,
\[
 h_t^{-1}\bigl(W_t^{0,G}(x)\bigr)\subseteq W_t^{0,m(x)}(o).
\]
Consequently,
\[
 |W_t^{0,G}(x)|\leq |W_t^{0,m(x)}(o)|.
\]
\end{lemma}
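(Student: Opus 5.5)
The plan is to use only the compatibility property of the lift $h_t$ from Lemma~\ref{lem:walk-lifting} at the final step $i=t$, together with the fact that all vertices of $m(x)$ carry distinct colors. No counting or measure theory should be needed.

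Fix $t\leq r$ and a walk $(v_0,\ldots,v_t)\in W_t^{m(x)}(o)$. Suppose its lift
\[
h_t(v_0,\ldots,v_t)=(x_0,\ldots,x_t)
\]
is closed, so $x_t=x_0=x$. Applying Lemma~\ref{lem:walk-lifting} with $i=t$ gives
\[
\gamma_{r-t,m(x)}(v_t)\cong\gamma_{r-t,m(x_t)}(o)=\gamma_{r-t,m(x)}(o).
\]
Because $t\leq r$, the radius $r-t$ is at least $0$. Hence both sides are rooted colored graphs of radius at least zero, and any isomorphism between them preserves the color of the root. So the color of $v_t$ in $m(x)$ equals the color of the root $o$ in $m(x)$. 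Since $m(x)\in\calG^*_{d,k,r}$ has pairwise distinct colors, this forces $v_t=o$. Therefore $(v_0,\ldots,v_t)$ is a closed walk at $o$ in $m(x)$, which proves the inclusion $h_t^{-1}(W_t^{0,G}(x))\subseteq W_t^{0,m(x)}(o)$.

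For the inequality, recall that $h_t$ is a bijection by Lemma~\ref{lem:walk-lifting}. It follows that
\[
|W_t^{0,G}(x)|=|h_t^{-1}(W_t^{0,G}(x))|,
\]
and by the inclusion just proved this is at most $|W_t^{0,m(x)}(o)|$.

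The only point needing care is the boundary case $t=r$, where the compatibility involves radius-$0$ neighbourhoods. There one must check that a rooted colored graph of radius $0$ still records the root color, so that the isomorphism genuinely identifies colors; this is immediate from the definition of $\calG_{d,k,0}$ as the set of colored one-vertex rooted graphs.
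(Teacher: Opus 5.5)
Your proposal is correct and follows the paper's proof: both apply the compatibility property of Lemma~\ref{lem:walk-lifting} at the final step to get $\gamma_{r-t,m(x)}(v_t)\cong\gamma_{r-t,m(x)}(o)$, and then use distinct colors in $m(x)$ to force $v_t=o$. The counting consequence via bijectivity of $h_t$, which the paper leaves implicit, is handled correctly in your write-up.
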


\begin{proof}
Let $(o=v_0,\ldots,v_t)$ be a walk in $m(x)$ whose image under $h_t$ is a closed walk
\[
 (x=x_0,x_1,\ldots,x_t=x)
\]
in $G$.  By Lemma~\ref{lem:walk-lifting},
\[
 \gamma_{r-t,m(x)}(v_t)\cong \gamma_{r-t,m(x_t)}(o)=\gamma_{r-t,m(x)}(o).
\]
The graph $m(x)$ has pairwise distinct colors.  Therefore two vertices of $m(x)$ cannot have isomorphic rooted colored neighbourhoods containing their root colors unless they are the same vertex.  Hence $v_t=o$, and the original walk in $m(x)$ is closed.
\end{proof}

\begin{lemma}[Reverse segment comparison]\label{lem:reverse-segment}
Assume the hypotheses of Lemma~\ref{lem:walk-lifting}.  Let
\[
 p=(o=v_0,\ldots,v_a=w),\qquad
 p'=(o=v'_0,\ldots,v'_b=w)
\]
be two walks in $m(x)$ with $a+b\leq r$.  Suppose that the lift of the closed model walk
\[
 p\overline{p'}=(v_0,\ldots,v_a=v'_b,v'_{b-1},\ldots,v'_0)
\]
is closed in $G$.  Then the endpoints of $h_a(p)$ and $h_b(p')$ coincide.
\end{lemma}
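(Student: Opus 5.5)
We prove this by writing the lift of the concatenated walk as a concatenation of two lifted pieces, and using the reversibility of the one-step lifting rule in Lemma~\ref{lem:walk-lifting}.

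Write $h=h_{a+b}$ and let $(x_0,\ldots,x_{a+b})=h(p\overline{p'})$, so $x_0=x_{a+b}=x$. By coherence of the maps $h_t$ under initial subwalks, $(x_0,\ldots,x_a)=h_a(p)$, so $x_a$ is the endpoint of $h_a(p)$. The task is therefore to show that $x_a$ equals the endpoint $y_b$ of $h_b(p')=(y_0,\ldots,y_b)$.

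The key step is a local reversibility property of the lifting, which we prove first. Suppose a model vertex $v$ is matched with a graphing vertex $z$, meaning $\gamma_{s,m(x)}(v)\cong\gamma_{s,m(z)}(o)$ with $s\ge 1$. Let $u\in N_{m(x)}(v)$ be lifted to $z'=b_z(\kappa(u))$. Then $u$ is matched with $z'$ at radius $s-1$, and $z$ is a neighbour of $z'$.

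We claim that lifting the step from $u$ back to $v$, starting at $z'$, returns $z$. To see this, compare two colored models: the model $m(z')$, and the radius-$(s-1)$ ball around $u$ in $m(x)$. Both contain a vertex colored like $v$, and colors are distinct.
\begin{itemize}
\item[(i)] Consistency at $z$ gives $\gamma_{s-1,m(z)}(\kappa(u))\cong\gamma_{s-1,m(z')}(o)$. Since $\kappa(u)$ is adjacent to the root of $m(z)$, the root of $m(z')$ has a neighbour $w$ carrying the color of the root of $m(z)$, which is the color of $v$.
\item[(ii)] Consistency at $z'$ gives $b_{z'}(w)\in N_G(z')$ with $\gamma_{r-1,m(z')}(w)\cong\gamma_{r-1,m(b_{z'}(w))}(o)$. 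So the root color of $m(b_{z'}(w))$ is the color of $v$.
\item[(iii)] Consistency at $z$ also forces the model at the neighbour $z'$ to contain, adjacent to its root, a vertex of the root color of $m(z)$.
\end{itemize}
The delicate point is to exclude that two distinct $G$-neighbours of $z'$ both carry models with root color equal to that of $v$. This follows because $b_{z'}$ is a bijection from the root-neighbours of $m(z')$, which have distinct colors, onto $N_G(z')$, and the root color of $m(b_{z'}(w'))$ equals the color of $w'$. Hence $b_{z'}(w)$ is the unique neighbour of $z'$ whose model has root color $c(v)$. By (i)–(ii), and since $z$ is a neighbour of $z'$ with root color $c(v)$, we get $b_{z'}(w)=z$, and the reverse lift from $z'$ returns $z$.

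Now apply this reversibility step by step along $\overline{p'}$, starting from the closed end of the lifted walk. We have $x_{a+b}=x=y_0$. Suppose $x_{a+b-j}=y_j$ and that this vertex is matched with $v'_j$ at a radius that is large enough. The lift of $p\overline{p'}$ passes from $x_{a+b-j-1}$, matched with $v'_{j+1}$, to $x_{a+b-j}$ via the step $v'_{j+1}\to v'_j$. The lift of $p'$ passes from $y_j$ via the step $v'_j\to v'_{j+1}$ to $y_{j+1}$. By reversibility, $x_{a+b-j-1}=y_{j+1}$.

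The main obstacle is keeping track of radii. Along $p\overline{p'}$, the vertex $x_{a+b-j}$ is matched with $v'_j$ only at radius $r-(a+b-j)$, and each reverse step consumes one unit of radius. The hypothesis $a+b\le r$ is exactly what keeps all radii $\ge 0$ with at least one step available, which is what uniqueness of the color-preserving isomorphisms requires. In particular, the argument uses only the root colors and root-neighbour structure of the models, which consistency controls at every vertex of $B_r(x)$. This is satisfied because every vertex involved lies within distance $\max(a,b)\le r$ of $x$. After $b$ steps we obtain $x_a=y_b$.
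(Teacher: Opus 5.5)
Your proof is correct, but it takes a different route from the paper's. The paper does no step-by-step induction. It takes the reversed final segment $(z_{a+b},\ldots,z_a)$ of the lifted closed walk, which is a graphing walk of length $b$ from $x$, and pulls it back through the bijection $h_b$ to a model walk $q$. It then notes that $q_j$ and $v'_j$ have the same color, since both equal the root color of $m(z_{a+b-j})$. Hence $q=p'$, because colors in $m(x)$ are distinct, and $h_b(p')$ is exactly the reversed segment.

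You instead work inside $G$. By consistency at $z'\in B_r(x)$ and the bijection $b_{z'}$, the $G$-neighbours of $z'$ carry pairwise distinct model root colors. So each next vertex of $h_b(p')$ is forced to equal the corresponding vertex of the reversed segment. In effect, the paper uses ``a model walk is determined by its color sequence'' plus surjectivity of $h_b$. You use ``a lifted graphing walk is determined by its sequence of model root colors'' and never need surjectivity. The paper's version is shorter given Lemma~\ref{lem:walk-lifting}; yours is more local and self-contained.

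Your write-up can be trimmed:
\begin{itemize}
\item Items (i) and (iii) are unnecessary. The root-neighbour $w$ of $m(z')$ with color $c(v)$ is supplied directly by the step of the lift of $p'$ at $y_j$.
\item As written, (i) uses radius $s-1$, which equals $0$ at the first step when $a+b=r$. Consistency actually gives radius $r-1$.
\item The radius paragraph is vaguer than needed. The only radius facts required are these: the lift of $p'$ matches $y_j$ with $v'_j$ at radius $r-j\ge 1$ for $j<b$. The lift of $p\overline{p'}$ matches each vertex with the corresponding model vertex at radius $\ge 0$, which already gives equality of root colors.
\end{itemize}
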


\begin{proof}
Write the lift of $p\overline{p'}$ as
\[
 (z_0,z_1,\ldots,z_{a+b}),
\]
so $z_0=z_{a+b}=x$.  By coherence, the first $a$ steps of this lift are $h_a(p)$, and hence $z_a$ is the endpoint of $h_a(p)$.

Consider the reversed final segment
\[
 (z_{a+b},z_{a+b-1},\ldots,z_a),
\]
which is a graphing walk of length $b$ starting at $x$.  Since $h_b$ is a bijection from model walks of length $b$ starting at $o$ to graphing walks of length $b$ starting at $x$, let
\[
 q=(o=q_0,q_1,\ldots,q_b)
\]
be its preimage under $h_b$.  For each $0\leq j\leq b$, the defining property of $h_b$ gives that the color of $q_j$ is the root color of the model assigned to $z_{a+b-j}$.  On the other hand, applying Lemma~\ref{lem:walk-lifting} to the lift of $p\overline{p'}$ shows that the same root color is the color of $v'_j$.  Since all vertices of $m(x)$ have distinct colors, $q_j=v'_j$ for every $j$.  Thus $q=p'$.

Therefore $h_b(p')$ is precisely the reversed final segment above, and its endpoint is $z_a$, the endpoint of $h_a(p)$.
\end{proof}

The following lemma is the technical heart of the argument.  It says that consistency plus equality of closed-walk counts forces the proposed model to agree with the actual neighbourhood.

\begin{lemma}\label{lem:isomorphism-from-closed-walks}
Let $G=(X,\nu,E)$ be a graphing, let $m\colon X\to\calG^*_{d,k,r}$ be measurable, and let $s\colon X\to[k]$ be the color of the root in $m(x)$.  Suppose that $m$ is consistent at every vertex of $B_r(x)$ and that
\[
 |W_{q}^{0,G}(x)|=|W_{q}^{0,m(x)}(o)|
 \qquad\text{for every }1\leq q\leq r.
\]
If $1\leq t<r/3$, then
\[
 \gamma_{t-1,m(x)}(o)\cong \gamma_{t-1,G,s}(x)
\]
as rooted colored graphs.
\end{lemma}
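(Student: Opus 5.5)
Throughout, fix $x$ as in the statement and use the lifts $h_q$ from Lemma~\ref{lem:walk-lifting}. The plan is to build an explicit map from the model ball $V_{t-1}:=B^{m(x)}_{t-1}(o)$ to the true ball $B^G_{t-1}(x)$ and prove it is a colour-preserving rooted isomorphism. For a vertex $w\in V_{t-1}$, choose a model walk $p$ from $o$ to $w$ of length at most $t-1$, and set $\phi(w)$ to be the endpoint of $h_{|p|}(p)$. By Lemma~\ref{lem:walk-lifting}, the root colour of $m(\phi(w))$ equals the colour of $w$. Hence $s(\phi(w))$ equals the colour of $w$, so $\phi$ is colour preserving and sends $o$ to $x$.

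\emph{Well-definedness.} This is the step where equality of closed-walk counts enters. By Lemma~\ref{lem:closed-walk-inequality} we have $h_q^{-1}(W_q^{0,G}(x))\subseteq W_q^{0,m(x)}(o)$. Both sides are finite and, by hypothesis, have the same cardinality for $q\le r$. Therefore $h_q$ maps closed model walks \emph{onto} closed graphing walks, and in particular every closed model walk of length $q\le r$ lifts to a closed walk.

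Now let $p,p'$ be two walks from $o$ to $w$ of lengths $a,b\le t-1$. Then $a+b\le 2t-2<r$, so $p\overline{p'}$ is a closed model walk of length at most $r$, and its lift is closed. Lemma~\ref{lem:reverse-segment} then gives that $h_a(p)$ and $h_b(p')$ have the same endpoint. Hence $\phi$ is well defined. Since its image consists of endpoints of walks of length at most $t-1$ from $x$, the image lies in $B^G_{t-1}(x)$.

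\emph{Injectivity and surjectivity.} For surjectivity, take $y\in B^G_{t-1}(x)$ and a walk from $x$ to $y$ of length at most $t-1$. Since $h$ is a bijection, this walk is $h(p)$ for some model walk $p$, and $p$ ends in $V_{t-1}$; so $y=\phi(\text{end of }p)$. For injectivity, suppose $\phi(w)=\phi(w')$. Then $\phi(w)$ and $\phi(w')$ have the same model $m$, hence the same root colour, so $w$ and $w'$ have the same colour. All colours in $m(x)$ are distinct, so $w=w'$.

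\emph{Edges.} Suppose $w\sim w'$ in $V_{t-1}$. Extending a walk to $w$ by one step gives a walk to $w'$, so by coherence of the lifts $\phi(w)\sim\phi(w')$. Conversely, suppose $\phi(w)\sim\phi(w')$ in $G$. Take the lift of a walk $p$ to $w$, of length $a\le t-1$, and extend it by the graphing edge to $\phi(w')$. This walk of length $a+1$ has a model preimage $p$ extended by some neighbour $u$ of $w$, and $\phi(u)=\phi(w')$. If $u\in V_{t-1}$, injectivity gives $u=w'$. Otherwise $u$ is at distance $t$ from $o$; in that case compare the walk $p u$ with a walk of length $\le t-1$ to $w'$. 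The concatenation has length at most $2t<r$, so the lift-closed argument via Lemma~\ref{lem:reverse-segment} again forces equal endpoints. The colour argument (the root colours of $m(\phi(u))=m(\phi(w'))$) then gives $u=w'$.

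This last case is why the margin $t<r/3$ is needed. Walks of length up to about $2t$, together with the radius $r-i$ compatibility in Lemma~\ref{lem:walk-lifting}, must stay within range. I expect the main obstacle to be exactly this bookkeeping: checking that every concatenated closed walk used has length at most $r$, and that each use of Lemma~\ref{lem:reverse-segment} is within $a+b\le r$. Once that is verified, $\phi$ is an isomorphism between the induced balls, giving $\gamma_{t-1,m(x)}(o)\cong\gamma_{t-1,G,s}(x)$.
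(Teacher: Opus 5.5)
Your overall route is the paper's. You lift model walks by $h_q$, and use Lemma~\ref{lem:closed-walk-inequality} with the equality of closed-walk counts to see that closed model walks of length at most $r$ lift to closed walks. You then get well-definedness of $\phi$ from Lemma~\ref{lem:reverse-segment}, injectivity from the distinct colours, and surjectivity from bijectivity of $h_q$; those parts are fine.

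The gap is in the edge step. It comes from defining $\phi$ only on the radius-$(t-1)$ model ball, with well-definedness proved only for walks of length at most $t-1$. In the forward direction you take a walk $p$ to $w$ of length $a\le t-1$, append $w'$, and assert ``by coherence'' that the lift of $pw'$ ends at $\phi(w')$. Coherence only says that this lift extends $h_a(p)$, so it ends at \emph{some} neighbour $z$ of $\phi(w)$. Since $pw'$ may have length $t$, nothing you have proved identifies $z$ with $\phi(w')$. Colours alone do not settle it either: you only know $s$ is injective on $B^G_{t-1}(x)$, while $z$ could a priori lie at distance $t$. The repair uses the same tool once more. If $p'$ is a walk of length at most $t-1$ to $w'$, then $pw'\overline{p'}$ is a closed model walk of length at most $2t-1<r$. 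Its lift is therefore closed, and Lemma~\ref{lem:reverse-segment} gives $z=\phi(w')$. The paper builds this in from the start by defining $\phi$ on the radius-$t$ model ball via walks of length at most $t$, with concatenations of length at most $2t<r$. The forward edge step is then immediate. The converse uses injectivity of $\phi$ on the radius-$t$ ball, together with the fact that points of $B^G_{t-1}(x)$ have preimages within distance $t-1$ of $o$.

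In your converse direction, the ``otherwise'' case is not a valid application of Lemma~\ref{lem:reverse-segment}. The walk $pu$ and a walk to $w'$ end at different model vertices unless $u=w'$, which is exactly what you want to prove. Also, $\phi(u)$ is not defined when $u$ is at distance $t$. The case split is unnecessary, though. By Lemma~\ref{lem:walk-lifting}, the colour of $u$ is the root colour of $m$ at the endpoint of the lift of $pu$, and that endpoint is $\phi(w')$ by construction. Its root colour $s(\phi(w'))$ is the colour of $w'$. Distinctness of colours in $m(x)$ then gives $u=w'$, whatever the distance of $u$ from $o$.

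Finally, your remark that the margin $t<r/3$ is what makes the argument work is inaccurate. Every concatenated closed walk you need has length at most $2t$, so $2t\le r$ suffices, and the paper's proof likewise only uses $2t<r$.
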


\begin{proof}
By Lemma~\ref{lem:closed-walk-inequality}, and by equality of the two closed-walk counts, $h_q$ maps closed model walks of length $q$ bijectively onto closed graphing walks of length $q$, for every $q\leq r$.

We first prove that the endpoint in $G$ of a lifted model walk of length at most $t$ depends only on its endpoint in the model.  Let $w$ be a vertex of $m(x)$ at distance at most $t$ from the root, and let
\[
 p=(o=v_0,v_1,\ldots,v_a=w),\qquad
 p'=(o=v'_0,v'_1,\ldots,v'_b=w)
\]
be two model walks with $a,b\leq t$.  The closed model walk $p\overline{p'}$ has length $a+b\leq 2t<r$.  Since closed model walks of this length lift to closed graphing walks, Lemma~\ref{lem:reverse-segment} implies that the endpoints of $h_a(p)$ and $h_b(p')$ coincide.

Thus we may define
\[
 \phi\colon V(\gamma_{t,m(x)}(o))\to V(\gamma_{t,G}(x))
\]
as follows.  For a vertex $v$ in the model, choose any walk from $o$ to $v$ of length at most $t$ and let $\phi(v)$ be the endpoint of its lifted walk in $G$.  The preceding paragraph shows that $\phi$ is well-defined.

The map $\phi$ preserves roots and colors.  Indeed, if the lift of a walk to $v$ ends at $\phi(v)$, Lemma~\ref{lem:walk-lifting} identifies a rooted neighbourhood of $v$ in $m(x)$ with a rooted neighbourhood of the root in $m(\phi(v))$.  In particular the color of $v$ is the root color assigned to $\phi(v)$, namely $s(\phi(v))$.  Since all colors in $m(x)$ are distinct, $\phi$ is injective.

The same closed-walk argument also gives surjectivity onto the actual radius-$t$ ball.  If $y\in B_t^G(x)$ and $\ell\leq t$ is the length of a graphing walk from $x$ to $y$, then the bijection $h_\ell$ has an inverse on walks of length $\ell$; applying this inverse to the chosen walk gives a model walk ending at some vertex $v$ with $\phi(v)=y$.

It remains to check that $\phi$ is a graph isomorphism between the radius-$(t-1)$ neighbourhoods.  If $v,w\in\gamma_{t-1,m(x)}(o)$ are adjacent in the model, then there is a walk from $o$ to $v$ followed by the edge $vw$ of total length at most $t$.  Lifting this walk shows that $\phi(v)$ and $\phi(w)$ are adjacent in $G$.

Conversely, suppose that $y,z\in\gamma_{t-1,G,s}(x)$ are adjacent in $G$.  Choose a walk from $x$ to $y$ of length at most $t-1$, and append the edge $yz$.  This gives a graphing walk of length at most $t$.  Applying the inverse of the corresponding $h_\ell$, where $\ell$ is this length, gives a model walk whose last edge maps to $yz$.  By the definition of $\phi$, the endpoints of this last edge are the unique preimages of $y$ and $z$.  Since both $y$ and $z$ lie within radius $t-1$ of $x$, these preimages lie within radius $t-1$ of $o$.  Hence the edge $yz$ comes from an edge in $\gamma_{t-1,m(x)}(o)$.

Thus $\phi$ is a color-preserving rooted graph isomorphism between the required neighbourhoods.
\end{proof}

\subsection{Closed-walk counts from star data}

\begin{lemma}\label{lem:closed-walk-stability}
For every $d,k,r$ and every $\varepsilon>0$ there exists $\delta>0$ with the following property.  Let $G'=(X',\nu',E')$ be a graphing with a coloring $g'\colon X'\to[k]$ such that every radius-$r$ ball has pairwise distinct colors, and put
\[
 m'(x)=\gamma_{r,G',g'}(x)\in\calG^*_{d,k,r}.
\]
Let $G=(X,\nu,E)$ be another graphing, and let $m\colon X\to\calG^*_{d,k,r}$ be measurable.  If, after identifying $\calG^*_{d,k,r}$ with a finite color set,
\[
 d_{\TV}\bigl(\sigma(G,m),\sigma(G',m')\bigr)\leq\delta,
\]
then, for every $1\leq t\leq r$,

\[
 \E_\nu\left|\,|W_t^{0,m(x)}(o)|-|W_t^{0,G}(x)|\,\right|
 \leq c_t(G')-c_t(G)+\varepsilon,
\]
and consequently
\[
 c_t(G)\leq c_t(G')+\varepsilon.
\]
\end{lemma}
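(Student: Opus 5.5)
We combine three ingredients: the consistency lemma, the pointwise inequality of Lemma~\ref{lem:closed-walk-inequality}, and the observation that the expected closed-walk count \emph{in the model} is a functional of the distribution of the root color $m(x)$ alone. The latter distribution is a marginal of $\sigma(G,m)$, so it is controlled by the total variation hypothesis.

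First we fix $\varepsilon$ and the parameters. Let $\mathcal M=\calG^*_{d,k,r}$ and set $F_t(M)=|W_t^{0,M}(o)|$ for $M\in\mathcal M$. Then $0\le F_t\le d^t\le d^r$.

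By Lemma~\ref{lem:true-neighbourhoods-consistent}, $m'$ is consistent everywhere. Moreover $m'(x)$ is the true coloured ball, so $F_t(m'(x))=|W_t^{0,G'}(x)|$ exactly, for $t\le r$. Walks of length $t\le r$ from $x$ stay inside $B_r(x)$, so the model's closed-walk count equals the graphing's. Hence
\[
\E_{\nu'}F_t(m'(x))=c_t(G').
\]

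For the other graphing, the root color of a star is $m(x)$, so $x\mapsto F_t(m(x))$ is a function of $\operatorname{st}_{G,m}(x)$. It is bounded by $d^r$. Therefore
\[
\bigl|\E_\nu F_t(m(x))-c_t(G')\bigr|\le 2d^r\delta .
\]

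Next we split $X$ into a good and a bad set. Apply Lemma~\ref{lem:consistency-from-stars} with $\varepsilon_1$ to get $\delta_1$. Let $Y\subseteq X$ be the set of $x$ such that $m$ is consistent on all of $B_r(x)$; then $\nu(X\setminus Y)\le\varepsilon_1$.

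On $Y$, Lemma~\ref{lem:walk-lifting} and Lemma~\ref{lem:closed-walk-inequality} give $|W_t^{0,G}(x)|\le F_t(m(x))$. So on $Y$ the absolute value equals the difference $F_t(m(x))-|W_t^{0,G}(x)|$.

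On $X\setminus Y$ both quantities lie in $[0,d^r]$. So
\[
\bigl|F_t-|W^0|\bigr|\le F_t-|W^0|+2d^r .
\]
Integrating,
\[
\E_\nu\bigl|F_t(m(x))-|W_t^{0,G}(x)|\bigr|\le \E_\nu F_t(m(x))-c_t(G)+2d^r\varepsilon_1 .
\]
Combining with the previous paragraph, the right-hand side is at most $c_t(G')-c_t(G)+2d^r\delta+2d^r\varepsilon_1$.

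Finally we choose $\varepsilon_1=\varepsilon/(4d^r)$ and $\delta=\min(\delta_1,\varepsilon/(4d^r))$. This gives the first inequality. The consequence $c_t(G)\le c_t(G')+\varepsilon$ follows because the left-hand side is nonnegative.

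The only delicate point is the identity $F_t(m'(x))=|W_t^{0,G'}(x)|$. This needs the model to be the induced radius-$r$ ball and $t\le r$, so that every closed walk of length $t$ lies within distance $\lfloor t/2\rfloor\le r$ and uses only edges inside the ball. Measurability of $Y$ and of the walk counts is covered by the standard facts quoted earlier.
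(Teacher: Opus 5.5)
Your proof is correct and follows essentially the same route as the paper: Lemma~\ref{lem:consistency-from-stars} gives consistency on most balls, Lemma~\ref{lem:closed-walk-inequality} gives the one-sided pointwise bound on the good set, and a crude uniform bound handles the bad set; $\E_\nu|W_t^{0,m(x)}(o)|$ is controlled through the root marginal in total variation and equals $c_t(G')$ on the $G'$ side because $m'(x)$ is the true ball. The differences are cosmetic: you bound each count by $d^r$ instead of the paper's $L=d^{r+1}$, and you treat the bad set pointwise rather than by splitting the integral.
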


\begin{proof} Observe that at every vertex there are at most $L=L_{d,r}:=d^{r+1}$ closed-walks of lengths at most $r$ in graphs (and graphings) of maximum degree at most $d\geq 2$. 

By Lemma~\ref{lem:true-neighbourhoods-consistent}, $m'$ is consistent everywhere.  By Lemma~\ref{lem:consistency-from-stars}, after choosing $\delta$ sufficiently small, the set
\[
 F=\{x\in X\,\bigm|\, m\text{ is not consistent on every vertex of }B_r(x)\}
\]
has measure as small as prescribed.  We choose $\delta$ so that $\nu(F)\leq \varepsilon/(4L)$.
Fix $1\leq t\leq r$ and write
\[
 A_t(x)=|W_t^{0,m(x)}(o)|,\qquad B_t(x)=|W_t^{0,G}(x)|.
\]
On $X\setminus F$, Lemma~\ref{lem:closed-walk-inequality} gives $B_t(x)\leq A_t(x)$.  Since $0\leq A_t,B_t\leq L$, we have
\[
 \begin{aligned}
 \E|A_t-B_t|
 &=\int_{X\setminus F}(A_t-B_t)\,d\nu+\int_F |A_t-B_t|\,d\nu\\
 &\leq \int_X(A_t-B_t)\,d\nu-\int_F(A_t-B_t)\,d\nu+L\nu(F)\\
 &\leq \E A_t-c_t(G)+2L\nu(F)\\
 &\leq \E A_t-c_t(G)+\varepsilon/2 .
 \end{aligned}
\]

The function $a\mapsto |W_t^{0,a}(o)|$ is a bounded function on $\calG^*_{d,k,r}$ with bound at most $L$. Therefore total variation closeness of the star distributions (and thus closeness of color distributions of $m$ and $m'$) implies, for sufficiently small $\delta$, that
\[
 \left|\E_\nu A_t-\E_{\nu'} |W_t^{0,m'(x)}(o)|\right|\leq\varepsilon/2
\]
simultaneously for all $1\leq t\leq r$.  But $m'(x)$ is the true radius-$r$ neighbourhood of $x$ in $G'$, so
\[
 \E_{\nu'} |W_t^{0,m'(x)}(o)|=c_t(G').
\]
Combining the displayed inequalities proves both conclusions.
\end{proof}

\begin{lemma}\label{lem:closed-walks-equal}
Let $G$ and $G'$ be graphings of maximum degree at most $d$.  Suppose that
\[
 \calS_k(G)=\calS_k(G')
\]
for every $k\geq 1$.  Then
\[
 c_t(G)=c_t(G')
\]
for every $t\geq 1$.
\end{lemma}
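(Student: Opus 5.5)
By symmetry between $G$ and $G'$, it suffices to prove $c_t(G)\leq c_t(G')+\varepsilon$ for every $t\geq 1$ and every $\varepsilon>0$; exchanging the roles of $G$ and $G'$ then gives equality. Fix $t$ and put $r=t$. This will be a direct application of Lemma~\ref{lem:closed-walk-stability}, whose second conclusion is exactly the desired inequality. So the only work is to produce the data that lemma requires: a coloring $g'$ of $G'$ with distinct colors on radius-$r$ balls, and a model map $m$ on $G$ whose star statistics are $\delta$-close to those of $m'$.

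First, apply Lemma~\ref{lem:separated-coloring} to $G'$ with radius $r$. This gives a measurable coloring $g'\colon X'\to[k]$ with $k=K_{d,r}$ such that every radius-$r$ ball receives pairwise distinct colors. Set $m'(x)=\gamma_{r,G',g'}(x)\in\calG^*_{d,k,r}$. This map is measurable by the standard facts recalled in Section~\ref{sec:local-global}. Now identify the finite set $\calG^*_{d,k,r}$ with a color set $[K]$, where $K=|\calG^*_{d,k,r}|$. Then $m'$ is a measurable $K$-coloring of $G'$, so $\sigma(G',m')\in\calS_K(G')$.

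Second, use the hypothesis $\calS_K(G')=\calS_K(G)$. It gives $\sigma(G',m')\in\calS_K(G)$. Since $\calS_K(G)$ is the closure of the star distributions realized by measurable $K$-colorings of $G$, for the $\delta$ supplied by Lemma~\ref{lem:closed-walk-stability} (with parameters $d,k,r,\varepsilon$) there is a measurable coloring $m\colon X\to[K]\cong\calG^*_{d,k,r}$ with
\[
 d_{\TV}\bigl(\sigma(G,m),\sigma(G',m')\bigr)\leq\delta .
\]
Lemma~\ref{lem:closed-walk-stability} then yields $c_t(G)\leq c_t(G')+\varepsilon$. Letting $\varepsilon\to 0$ and swapping $G,G'$ gives $c_t(G)=c_t(G')$.

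I do not expect a serious obstacle, since the heavy lifting is in Lemma~\ref{lem:closed-walk-stability}. The point that needs care is the bookkeeping of parameters. The number of colors $k=K_{d,r}$ of the separated coloring is fixed first. This determines the model set $\calG^*_{d,k,r}$ and the number of star colors $K$, and only then is the hypothesis $\calS_K(G)=\calS_K(G')$ invoked, at this specific $K$. We also need $\delta$ to depend only on $d,k,r,\varepsilon$, so that it can be chosen before $m$ is selected. It is also worth noting that Lemma~\ref{lem:closed-walk-stability} is applied with $t=r$, which is allowed by its range $1\leq t\leq r$.
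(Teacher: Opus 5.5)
Your proposal is correct and is essentially the paper's own proof. Both arguments use Lemma~\ref{lem:separated-coloring} to separate the radius-$t$ balls of $G'$, encode the true-neighbourhood model $m'$ as a finite coloring, and invoke equality of the star sets at that specific number of colors to obtain a $\delta$-close model $m$ on $G$; they then apply Lemma~\ref{lem:closed-walk-stability} with $r=t$, let $\varepsilon\downarrow 0$, and swap $G$ and $G'$. Your attention to the order of parameter choices (first $K_{d,t}$, then the model set, then $\delta$, then $m$) is exactly the bookkeeping the argument needs.
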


\begin{proof}
By symmetry it suffices to prove $c_t(G)\leq c_t(G')$.

Fix $t\geq1$.  By Lemma~\ref{lem:separated-coloring}, choose a coloring
\[
 s\colon X'\to[K_{d,t}]
\]
that separates every radius-$t$ ball in $G'$.  Define
\[
 m'(x)=\gamma_{t,G',s}(x)\in\calG^*_{d,K_{d,t},t}.
\]
Let
\[
 \mathcal M=\calG^*_{d,K_{d,t},t},\qquad a=|\mathcal M|.
\]
Choose a bijection between $\mathcal M$ and $[a]$ and regard $m'$ as an $a$-coloring of $G'$.

The equality $\calS_a(G)=\calS_a(G')$ means that, for every $\delta>0$, there exists a measurable $a$-coloring of $G$ whose star distribution is within $\delta$ of $\sigma(G',m')$.  Decoding this coloring through the chosen bijection gives a measurable map
\[
 m\colon X\to\mathcal M
\]
such that
\[
 d_{\TV}\bigl(\sigma(G,m),\sigma(G',m')\bigr)\leq\delta.
\]
Applying Lemma~\ref{lem:closed-walk-stability} with $r=t$ gives, for every $\varepsilon>0$,
\[
 c_t(G)\leq c_t(G')+\varepsilon.
\]
Letting $\varepsilon\downarrow 0$ yields $c_t(G)\leq c_t(G')$.  The reverse inequality follows by interchanging $G$ and $G'$.
\end{proof}

\subsection{Completion of the proof}

\begin{proof}[Proof of Theorem~\ref{thm:main}]
As noted before, local-global convergence implies convergence of the sets of colored star distributions $\calS_k$ by Lemma~\ref{lem:star-continuity}.

Conversely, assume that $\calS_k(G_n)$ converges for every $k\geq 1$.  We prove that $(G_n)$ is local-global convergent.  By the compactness theorem recalled in Theorem~\ref{thm:HLS}, it is enough to show that any two local-global subsequential limits are local-global equivalent.

Let $G$ and $G'$ be two such subsequential limits.  Since the star data converge along the whole sequence and, by Lemma~\ref{lem:star-continuity}, converge along the two subsequences to the star sets of their local-global limits, we have
\[
 \calS_k(G)=\calS_k(G')
 \qquad\text{for every }k\geq 1.
\]
By Lemma~\ref{lem:closed-walks-equal},
\[
 c_t(G)=c_t(G')
 \qquad\text{for every }t\geq 1.
\]

We prove $\calQ_{k,R}(G')\subseteq\calQ_{k,R}(G)$ for arbitrary $k,R$; the reverse containment follows symmetrically.

Fix $k\geq 1$, $R\geq 1$, and a measurable coloring
\[
 g'\colon X'\to[k]
\]
of $G'$.  Choose an integer $r$ so large that $R+1<r/3$.  Let
\[
 s_r\colon X'\to[K_{d,r}]
\]
be a coloring that separates all radius-$r$ balls in $G'$, and form the product coloring
\[
 \widetilde g'=(g',s_r)\colon X'\to[k]\times[K_{d,r}].
\]
Let
\[
 m'(x)=\gamma_{r,G',\widetilde g'}(x)
 \in\calG^*_{d,kK_{d,r},r}.
\]
By Lemma~\ref{lem:true-neighbourhoods-consistent}, $m'$ is consistent everywhere.

Put
\[
 \mathcal M=\calG^*_{d,kK_{d,r},r},\qquad a=|\mathcal M|,
\]
choose a bijection $\mathcal M\leftrightarrow[a]$, and regard $m'$ as an $a$-coloring of $G'$.  Since $\calS_a(G)=\calS_a(G')$, for every $\delta>0$ there is a measurable map
\[
 m\colon X\to\mathcal M
\]
such that
\[
 d_{\TV}\bigl(\sigma(G,m),\sigma(G',m')\bigr)\leq\delta.
\]
Let $\widetilde g\colon X\to[k]\times[K_{d,r}]$ be the root color of $m(x)$, and let $g$ be its first coordinate.

By Lemma~\ref{lem:consistency-from-stars}, for all but $o_\delta(1)$ of the points $x\in X$, the map $m$ is consistent on all of $B_r(x)$.  By Lemmas~\ref{lem:closed-walk-stability} and~\ref{lem:closed-walks-equal}, for each $1\leq q\leq r$,
\[
 \E_\nu\left|\,|W_q^{0,m(x)}(o)|-|W_q^{0,G}(x)|\,\right|=o_\delta(1).
\]
Let
\[
 D_q=\{x\,\bigm|\, |W_q^{0,m(x)}(o)|\neq |W_q^{0,G}(x)|\}.
\]
The discrepancy is integer-valued, hence $\nu(D_q)\leq \E  \bigl|\,|W_q^{0,m(x)}(o)|-|W_q^{0,G}(x)|\,\bigr|=o_\delta(1)$.  Taking the union over the finitely many $q=1,\ldots,r$, we obtain a set of measure $o_\delta(1)$ outside which
\[
 |W_q^{0,m(x)}(o)|=|W_q^{0,G}(x)|
 \qquad\text{for all }1\leq q\leq r,
\]
and $m$ is consistent on $B_r(x)$.  On this good set, Lemma~\ref{lem:isomorphism-from-closed-walks}, applied with $t=R+1$, gives
\[
 \gamma_{R,m(x)}(o)
 \cong
 \gamma_{R,G,\widetilde g}(x).
\]

The root marginal of $\sigma(G,m)$ is the law of $m(x)$, and the root marginal of $\sigma(G',m')$ is the law of $m'(x')$.  Therefore these two laws are $o_\delta(1)$-close.  Applying the deterministic truncation map
\[
 \mathcal M\to \calG_{d,kK_{d,r},R},\qquad M\mapsto \gamma_{R,M}(o),
\]
we see that the distribution of $\gamma_{R,m(x)}(o)$ is $o_\delta(1)$-close to the distribution of
\[
 \gamma_{R,G',\widetilde g'}(x').
\]
Thus
\[
 \tau_R(G,\widetilde g)
\]
can be made arbitrarily close to
\[
 \tau_R(G',\widetilde g').
\]
Finally let
\[
 \Pi\colon \calP(\calG_{d,kK_{d,r},R})\to \calP(\calG_{d,k,R})
\]
be the affine continuous map induced by forgetting the auxiliary separating color.  Applying $\Pi$, we conclude that
\[
 \tau_R(G,g)
\]
can be made arbitrarily close to
\[
 \tau_R(G',g').
\]
Thus $\tau_R(G',g')\in\calQ_{k,R}(G)$.  Since $g'$ was arbitrary, $\calQ_{k,R}(G')\subseteq\calQ_{k,R}(G)$.

Interchanging the roles of $G$ and $G'$ gives the reverse containment.  Hence the two graphings are local-global equivalent.  Therefore all local-global subsequential limits of $(G_n)$ are equivalent, and $(G_n)$ is local-global convergent.
\end{proof}

\section{The colored cherry metric}\label{sec:cherry}

In this section we introduce a second observation which, at first sight, appears weaker than the colored star observation. We shall show that it nevertheless generates the same topology. Combined with Theorem~\ref{thm:main}, this implies that colored cherry statistics are also equivalent to local-global convergence.

The point of the argument is the following. A colored star records the whole multiset of colors around a vertex. A colored cherry records only two randomly chosen neighbours of the root, with special deterministic conventions in degrees zero and one. Thus a single cherry observation does not see the whole degree vector. However, if arbitrary auxiliary colorings are allowed, we may color each vertex by a proposed local star. The cherry observation then tests whether pairs of actual neighbours are compatible with this proposed star. This pairwise information gives an injective matching from the actual neighbours into the leaves prescribed by the model. A symmetric argument forces equality of average degrees, and hence this injection must be a bijection on most vertices. At that point the proposed star is a genuine colored star.

\subsection{Cherry observations}

Let $\mathsf{Ch}_k$ denote the finite set of isomorphism classes of the following rooted $k$-colored graphs:
\begin{enumerate}[label=(\roman*)]
 \item a single rooted colored vertex;
 \item a rooted colored edge, with the root at one endpoint;
 \item a colored path of length two rooted at the central vertex, equivalently a rooted two-star with two unordered leaves.
\end{enumerate}
In the third case the two leaves are not ordered.

Let $G=(X,\nu,E)$ be a graphing of maximum degree at most $d$, and let $f\colon X\to[k]$ be a measurable coloring. We define a probability distribution
\[
 \chi(G,f)\in\calP(\mathsf{Ch}_k)
\]
as follows. First choose a random vertex $x\in X$ according to $\nu$.
\begin{itemize}
 \item If $\deg_G(x)=0$, the outcome is the one-point rooted colored graph consisting of $x$.
 \item If $\deg_G(x)=1$, with unique neighbour $y$, the outcome is the rooted colored edge $xy$, rooted at $x$.
 \item If $\deg_G(x)\geq 2$, choose two distinct neighbours $y,z\in N_G(x)$ uniformly among all unordered pairs, and output the rooted colored two-star with centre $x$ and leaves $y,z$.
\end{itemize}
For finite graphs the same definition is used with the uniform measure on vertices.  More generally, if the coloring takes values in any finite set, we identify that set with $[a]$ and use the same notation $\chi(G,f)$.

Define the raw cherry set
\[
 C_k(G)=\{\chi(G,f)\,\bigm|\, f\colon X\to[k]\text{ measurable}\}\subseteq\calP(\mathsf{Ch}_k)
\]
and let $\overline C_k(G)$ be its closure in total variation distance.

\begin{definition}[colored cherry convergence]
A sequence of graphings $(G_n)_{n\geq 1}$ of maximum degree at most $d$ is \emph{colored-cherry convergent} if, for every $k\geq 1$, the closed sets $\overline C_k(G_n)$ converge in Hausdorff distance.
\end{definition}

The goal of this section is to prove the following theorem.

\begin{theorem}[Cherry, star and local-global equivalence]\label{thm:cherry-equivalence}
Let $(G_n)_{n\geq 1}$ be a sequence of graphings of maximum degree at most $d\geq 2$. Then the following are equivalent:
\begin{enumerate}[label=(\alph*)]
 \item $(G_n)$ is local-global convergent;
 \item for every $k\geq 1$, the sets $\calS_k(G_n)$ of colored star statistics converge;
 \item for every $k\geq 1$, the sets $\overline C_k(G_n)$ of colored cherry statistics converge.
\end{enumerate}
\end{theorem}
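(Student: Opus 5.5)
(a)$\Leftrightarrow$(b) is Theorem~\ref{thm:main}. For (a)$\Rightarrow$(c), I would argue as in Lemma~\ref{lem:star-continuity}. The cherry distribution $\chi(G,f)$ is the image of $\tau_1(G,f)$ under a fixed affine map $\rho_k\colon\calP(\calG_{d,k,1})\to\calP(\mathsf{Ch}_k)$. For each rooted radius-one type, $\rho_k$ outputs the vertex, the edge, or the uniform mixture over unordered pairs of leaves. Hence $\overline C_k(G)=\rho_k(\calQ_{k,1}(G))$, and Hausdorff convergence passes through continuous images.

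The substantial direction is (c)$\Rightarrow$(a), and I would mirror the proof of Theorem~\ref{thm:main}. First, (a)$\Rightarrow$(c) and compactness (Theorem~\ref{thm:HLS}) show that two subsequential local-global limits $G,G'$ satisfy $\overline C_k(G)=\overline C_k(G')$ for all $k$. It then suffices to prove that $\overline C_k(G)=\overline C_k(G')$ for all $k$ implies $\calS_k(G)=\calS_k(G')$ for all $k$. Once that holds, the argument in the proof of Theorem~\ref{thm:main} (Lemma~\ref{lem:closed-walks-equal} and the remainder) gives local-global equivalence, so all subsequential limits coincide.

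To show $\calS_k(G')\subseteq\calS_k(G)$, fix a coloring $f'$ of $G'$. Take the separated coloring $s$ of Lemma~\ref{lem:separated-coloring} with $r=1$, and let $m'(x)$ be the colored star of $x$ under $(f',s)$. This is a star with distinct leaf colors, i.e.\ an element of a finite set $\mathcal M$ whose leaves also carry their auxiliary labels. Now choose $m\colon X\to\mathcal M$ with $\chi(G,m)$ within $\delta$ of $\chi(G',m')$. Call a cherry \emph{bad} if either of the following holds:
\begin{itemize}
\item its root type is incompatible with its observed size; that is, degree $0$ or $1$ is observed while the model has a different number of leaves, or a two-star is observed while the model has fewer than $2$ leaves;
\item some observed leaf's root color is not a leaf label of the root model, or two leaves have the same label.
\end{itemize}
In $G'$ no cherry is bad. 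For degrees $\ge2$ each pair is seen with probability at least $1/\binom d2$, so in $G$ the set of vertices having any bad pair has measure $O_d(\delta)$. Outside this set, the map sending each actual neighbour to the leaf of $m(x)$ bearing its label is well defined and injective. Consequently $\deg_G(x)\le|N_{m(x)}(o)|$ outside an $O(\delta)$ set.

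Next I would compare averages. The expected number of model leaves is determined by the root marginal, which is $\delta$-close to the corresponding quantity in $G'$, namely the average degree of $G'$. The average degree itself is visible in one-color cherry data, or follows from $\calS_1$ closure equality via $k=1$. So $\E\deg_G=\E\deg_{G'}$, and therefore $\E\bigl(|N_{m(x)}(o)|-\deg_G(x)\bigr)=O(\delta)$. The difference is a nonnegative integer off a small set, so injectivity is a bijection outside a set of measure $O(\delta)$. Wherever it is a bijection, the first coordinates of $m(x)$ record the true colored star of $x$ under the coloring $f(x):=$ first coordinate of the root color of $m(x)$. The root marginals are close, so $\sigma(G,f)$ is $O(\delta)$-close to $\sigma(G',f')$, and letting $\delta\to0$ gives the inclusion. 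The symmetric argument gives equality.

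I expect the main obstacle to be the averaging step. I must make sure equality of average degrees really follows from cherry data: with one color, $\chi$ only gives $\nu(\deg=0)$, $\nu(\deg=1)$ and $\nu(\deg\ge2)$. I would instead obtain it from the tight injection: in $G'$, the labels of the model leaves at $x$ are exactly the root labels of the neighbours. By mass transport, the measure of pairs $(x,\text{leaf label }\ell)$ equals the measure of pairs $(y,\text{label of }x\text{ in }m(y))$. Using edge-rooted observations on both sides, degree-$1$ cherries and marginals of two-stars appropriately weighted, I would check that the expected counts of labelled leaves ``pointing back'' match up to $O(\delta)$. Failing that, I would encode the degree in the colors, testing $\deg\ge j$ via coloring schemes, to recover the degree distribution directly from cherry statistics.
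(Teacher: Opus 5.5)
Your architecture matches the paper's: equal cherry sets for two subsequential limits, then equal star sets via separated true-star models, an injection of actual neighbours into model leaves, and a counting argument. The gap is the step $\E\deg_G=\E\deg_{G'}$, which is not justified as written.

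Neither justification you give works. One-colour cherries only see $\nu(\deg=0)$, $\nu(\deg=1)$ and $\nu(\deg\ge2)$, as you note yourself. Appealing to $\calS_1(G)=\calS_1(G')$ is circular: that is the case $k=1$ of the inclusion you are proving, and proving it by your argument needs the same average-degree equality. The fallback ideas (a mass-transport count of leaves ``pointing back'', or encoding degree thresholds in colourings) are not carried out. It is also unclear how the first could succeed, because every cherry-observable edge average is normalised by the unknown root degree $\deg_G(x)$.

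The missing idea is simple, and the ingredients are already in your argument; it is exactly the paper's Lemma~\ref{lem:degree-inequality-from-cherries}.
\begin{itemize}
\item Off a set of measure $O_d(\delta)$ you have shown $\deg_G(x)\le |N_{m(x)}(o)|$.
\item Integrate this, using that degrees are bounded by $d$ on the exceptional set and that the root marginals are $\delta$-close. This gives $\E\deg_G\le \E|N_{m(x)}(o)|+O_d(\delta)=\E\deg_{G'}+O_d(\delta)$, and hence $\E\deg_G\le\E\deg_{G'}$ as $\delta\to0$.
\item The hypothesis $\overline C_k(G)=\overline C_k(G')$ is symmetric. So rerun the same construction with the roles exchanged: a true separated star assignment on $G$ and an approximating model on $G'$. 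This gives the reverse inequality.
\end{itemize}

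Prove this as a separate step before the inclusion $\calS_k(G')\subseteq\calS_k(G)$. After that, the rest of your argument goes through as in Lemma~\ref{lem:cherries-determine-stars}: a nonnegative integer defect with mean $O(\delta)$, hence a bijection off a small set, then projection to the first colour coordinate.
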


The equivalence of (a) and (b) is Theorem~\ref{thm:main}. It remains to compare stars and cherries.

\subsection{Stars determine cherries}

The easy direction is that a colored star determines the distribution of the cherry sampled from its root.

Let $A\in\mathsf S_{d,k}$ be a rooted colored star. If the root has degree $0$, define $\Theta_k(A)$ to be the corresponding one-point cherry. If the root has degree $1$, define $\Theta_k(A)$ to be the corresponding rooted colored edge. If the root has degree at least $2$, define $\Theta_k(A)$ to be the probability distribution on rooted colored two-stars obtained by choosing two distinct leaves of $A$ uniformly at random. Extending linearly gives an affine continuous map
\[
 \Theta_k\colon \calP(\mathsf S_{d,k})\to\calP(\mathsf{Ch}_k).
\]

\begin{lemma}\label{lem:star-to-cherry}
For every graphing $G$ and every measurable coloring $f\colon X\to[k]$,
\[
 \chi(G,f)=\Theta_k\bigl(\sigma(G,f)\bigr).
\]
Consequently,
\[
 \Theta_k(\calS_k(G))=\overline C_k(G),
\]
and convergence of $\calS_k(G_n)$ for every $k$ implies convergence of $\overline C_k(G_n)$ for every $k$.
\end{lemma}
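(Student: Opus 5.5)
The identity $\chi(G,f)=\Theta_k(\sigma(G,f))$ follows by conditioning on the star at the root. Let $x\sim\nu$ and condition on the event $\operatorname{st}_{G,f}(x)=A$ for a fixed $A\in\mathsf S_{d,k}$. The cherry outcome is determined by $f(x)$ together with the colors of the sampled neighbours. If $\deg x=0$ or $1$, this outcome is deterministic and equals $\Theta_k(A)$. If $\deg x\geq 2$, a uniformly random unordered pair of distinct neighbours of $x$ induces a uniformly random unordered pair of distinct leaves of $A$. This holds because $\operatorname{st}_{G,f}(x)$ records exactly the multiset of neighbour colors, and the unordered two-star depends only on the colors of the chosen pair. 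So the conditional law of the cherry is $\Theta_k(A)$. Integrating over the law of the star, which is $\sigma(G,f)$, and using that $\Theta_k$ was extended linearly, gives $\chi(G,f)=\sum_A \sigma(G,f)(A)\,\Theta_k(A)=\Theta_k(\sigma(G,f))$. One should also note that the sampling rule is measurable, i.e.\ the cherry distribution is well defined via a Lusin--Novikov enumeration of neighbours. This is a standard fact and is the only point needing any care.

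From the identity we get $\Theta_k\bigl(\{\sigma(G,f)\}_f\bigr)=C_k(G)$. The map $\Theta_k$ is affine on a finite-dimensional simplex, hence continuous, and $\calP(\mathsf S_{d,k})$ is compact. Therefore $\Theta_k(\calS_k(G))=\Theta_k(\overline{\{\sigma(G,f)\}})$ is compact and contains $C_k(G)$, so it contains $\overline C_k(G)$. Conversely, continuity gives $\Theta_k(\overline{S})\subseteq\overline{\Theta_k(S)}$. Hence $\Theta_k(\calS_k(G))=\overline C_k(G)$. This is the same argument as in the proof of Lemma~\ref{lem:star-continuity}.

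For the convergence statement, argue as in Lemma~\ref{lem:star-continuity}. The map $\Theta_k$ is uniformly continuous on the compact space $\calP(\mathsf S_{d,k})$, so it sends Hausdorff-convergent sequences of compact sets to Hausdorff-convergent sequences. Concretely, if $d_H(K_n,K)\to0$ then $d_H(\Theta_k K_n,\Theta_k K)\leq \omega(d_H(K_n,K))\to0$, where $\omega$ is a modulus of continuity of $\Theta_k$. If $\calS_k(G_n)$ converges to some compact $K$ (the limit exists in the complete space of compact subsets), then $\overline C_k(G_n)=\Theta_k(\calS_k(G_n))$ converges to $\Theta_k(K)$. We do not expect a real obstacle in this proof. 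The only step that needs checking is the conditional uniformity in the degree-at-least-two case, which is immediate from the definitions.
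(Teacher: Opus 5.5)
Your proposal is correct and takes the same route as the paper. You condition on $\operatorname{st}_{G,f}(x)$ to identify the conditional cherry law with $\Theta_k(A)$, then use continuity of the affine map $\Theta_k$ on the compact simplex to get $\Theta_k(\calS_k(G))=\overline C_k(G)$ and to transfer Hausdorff convergence; you also spell out details the paper leaves implicit, namely both inclusions for the closure and the bound $d_H(\Theta_k K_n,\Theta_k K)\le\omega(d_H(K_n,K))$.
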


\begin{proof}
Condition on the rooted colored star $\operatorname{st}_{G,f}(x)$. The cherry sampling rule depends only on this star: for degree $0$ and degree $1$ there is no further choice, while for degree at least $2$ the two leaves are chosen uniformly among the unordered pairs of leaves of the star. This is exactly the definition of $\Theta_k$.

Let
\[
 R_k(G)=\{\sigma(G,f)\,\bigm|\,f\colon X\to[k]\text{ measurable}\}
\]
be the raw star set.  The identity above gives $\Theta_k(R_k(G))=C_k(G)$.  Since $\calP(\mathsf S_{d,k})$ is compact, $\calS_k(G)$ is compact, and the continuous image $\Theta_k(\calS_k(G))$ is compact and hence closed.  Therefore
\[
 \Theta_k(\calS_k(G))=\overline{\Theta_k(R_k(G))}=\overline C_k(G).
\]
Finally, continuous maps send Hausdorff-convergent compact sets to Hausdorff-convergent compact image sets.
\end{proof}

\subsection{Model colors and cherry-consistency}

We now prove the converse direction. Throughout this subsection, fix a finite color set $[k]$.  Whenever a map takes values in a finite model set, such as $\mathsf S_{d,k}$, we identify that finite set with a color set when forming cherry statistics.

A rooted colored star $A\in\mathsf S_{d,k}$ has a root color, denoted $\rho(A)$, and a multiset of leaf colors. For $i\in[k]$, let
\[
 n_i(A)
\]
be the number of leaves of color $i$ in $A$, and let
\[
 \deg(A)=\sum_{i=1}^k n_i(A)
\]
be the degree of its root.

Now let $G=(X,\nu,E)$ be a graphing and let
\[
 m\colon X\to\mathsf S_{d,k}
\]
be a measurable assignment of a proposed colored star to each vertex. We write
\[
 \rho_m(x)=\rho(m(x))
\]
for the proposed root color at $x$. For $i\in[k]$, define the actual neighbour count
\[
 a_i^m(x)=|\{y\in N_G(x)\,\bigm|\,\rho_m(y)=i\}|.
\]
Thus $m$ is a genuine star assignment precisely when
\[
 a_i^m(x)=n_i(m(x))\qquad\text{for all }i
\]
for almost every $x$.

\begin{definition}[Cherry-consistency]\label{def:cherry-consistency}
The assignment $m$ is called \emph{cherry-consistent at $x$} if the following three conditions hold.
\begin{enumerate}[label=(\roman*)]
 \item If $\deg_G(x)=0$, then $\deg(m(x))=0$.
 \item If $\deg_G(x)=1$, with unique neighbour $y$, then $m(x)$ has at least one leaf of color $\rho_m(y)$.
 \item If $\deg_G(x)\geq 2$, then for every two distinct neighbours $y,z\in N_G(x)$, the multiset
 \[
  \{\rho_m(y),\rho_m(z)\}
 \]
 is contained in the multiset of leaf colors of $m(x)$.
\end{enumerate}
\end{definition}

This definition says exactly that every cherry which can be sampled around $x$ is compatible with the proposed star $m(x)$.

\begin{lemma}\label{lem:cherry-test-consistency}
For every $d,k$ and every $\varepsilon>0$ there exists $\delta>0$ with the following property. Let $G=(X,\nu,E)$ and $G'=(X',\nu',E')$ be graphings, and let
\[
 m\colon X\to\mathsf S_{d,k},\qquad m'\colon X'\to\mathsf S_{d,k}
\]
be measurable assignments. Assume that $m'$ is the true star assignment of its root-color map, that is,
\[
 m'(x')=\operatorname{st}_{G',\rho_{m'}}(x')
 \qquad\text{for almost every }x'\in X',
\]
and assume that
\[
 d_{\TV}\bigl(\chi(G,m),\chi(G',m')\bigr)\leq\delta,
\]
where $m$ and $m'$ are viewed as finite-valued colorings. Then $m$ is cherry-consistent outside a set of measure at most $\varepsilon$.
\end{lemma}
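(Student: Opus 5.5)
The plan mirrors the proof of Lemma~\ref{lem:consistency-from-stars}: isolate a finite set of ``bad'' cherry types that the true assignment $m'$ never produces, and use the total variation bound to control how often they appear under $m$. The difference from the star case is that a single cherry does not see the whole neighbourhood. Cherry-inconsistency at $x$ is witnessed by only one particular pair of neighbours, and that pair is sampled with probability at least $1/\binom{d}{2}$. So a bad vertex produces a bad cherry with probability bounded below, not with probability one.

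\emph{The bad set.} View $\mathcal M=\mathsf S_{d,k}$ as the colour set and let $\mathcal B\subseteq\mathsf{Ch}_{\mathcal M}$ consist of the following cherry types:
\begin{enumerate}[label=(\roman*)]
 \item a one-point cherry with root colour $A$ and $\deg(A)>0$;
 \item a rooted edge with root colour $A$ and leaf colour $B$, where either $\deg(A)\neq 1$ or $A$ has no leaf of colour $\rho(B)$;
 \item a two-star with root colour $A$ and leaf colours $B,C$, where either $\deg(A)<2$ or the multiset $\{\rho(B),\rho(C)\}$ is not contained in the leaf multiset of $A$.
\end{enumerate}
The degree mismatches are included because the cherry shape reveals whether $\deg_G(x)$ equals $0$, equals $1$, or is at least $2$.

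\emph{The true assignment never produces bad cherries.} Since $m'(x')$ is the true star of $x'$ for $\nu'$-almost every $x'$, we have $\deg(m'(x'))=\deg_{G'}(x')$, and the leaf multiset of $m'(x')$ is exactly the multiset $\{\rho_{m'}(y)\mid y\in N_{G'}(x')\}$. Any one or two sampled neighbours therefore have $\rho_{m'}$-colours contained in that multiset. Hence $\chi(G',m')(\mathcal B)=0$, and so
\[
 \chi(G,m)(\mathcal B)\le d_{\TV}\bigl(\chi(G,m),\chi(G',m')\bigr)\le\delta .
\]

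\emph{Lower bound at bad vertices.} Let $U$ be the set where $m$ fails cherry-consistency. Condition (i) failing produces a bad cherry with probability $1$. Condition (ii) failing produces the edge $xy$ with probability $1$, and it is bad. Condition (iii) failing at $x$ means some pair $y,z$ is incompatible; that pair is chosen with probability $1/\binom{\deg_G(x)}{2}\ge 1/\binom d2$, and the resulting cherry is bad. Integrating over $x$ gives
\[
 \chi(G,m)(\mathcal B)\ \ge\ \nu(U)\Big/\binom d2 .
\]
Combining the two bounds, $\nu(U)\le\binom d2\delta$, so choosing $\delta=\varepsilon/\binom d2$ proves the lemma.

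The only step needing care is this lower bound: one must verify that the conditional probability of sampling the witnessing pair is bounded below uniformly in $x$. That uniformity is exactly where the degree bound $d$ enters. Measurability of $U$ and of the sampling is routine via Lusin--Novikov, as elsewhere in the paper.
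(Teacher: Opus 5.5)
Your proposal is correct and is essentially the paper's proof. The paper likewise isolates the finite set of inadmissible cherry outcomes, notes that $(G',m')$ never produces one, and uses the uniform lower bound $1/\binom{d}{2}$ on the probability that a non-consistent vertex yields one, giving $\nu(U)\le\binom{d}{2}\delta$. Your extra degree-mismatch types in $\mathcal B$ are harmless, since the true assignment never produces them, but they are not needed for the argument.
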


\begin{proof}
Let $\mathcal M=\mathsf S_{d,k}$, viewed as the color set used in the cherry statistic.  Define a subset
\[
 \mathcal A\subseteq \mathsf{Ch}_{\mathcal M}
\]
of admissible cherry outcomes as follows.  A one-point outcome with root color $A\in\mathcal M$ is admissible if $\deg(A)=0$.  A rooted-edge outcome with root color $A$ and neighbour color $B$ is admissible if $A$ has at least one leaf of color $\rho(B)$.  A two-star outcome with root color $A$ and leaf colors $B,C$ is admissible if the multiset $\{\rho(B),\rho(C)\}$ is contained in the multiset of leaf colors of $A$.  This is a finite condition depending only on the cherry outcome.

Since $m'$ is a true star assignment, every cherry outcome produced by $(G',m')$ belongs to $\mathcal A$.  Therefore
\[
 \chi(G,m)(\mathsf{Ch}_{\mathcal M}\setminus\mathcal A)\leq
 d_{\TV}\bigl(\chi(G,m),\chi(G',m')\bigr)\leq\delta.
\]
%Let
%\[
% B_d=\max\left(1,\binom d2\right).
%\]
If $m$ is not cherry-consistent at a vertex $x$, then at least one inadmissible cherry can be sampled from $x$.  The conditional probability of sampling such a bad cherry is at least $\binom d2^{-1}$: it is $1$ in degrees $0$ and $1$, and at least $\binom d2^{-1}$ in degree at least $2$.  Hence the measure of the set of vertices where $m$ is not cherry-consistent is at most $\binom d2\delta$.  Choosing $\delta\leq \varepsilon/\binom d2$ proves the lemma.
\end{proof}

Cherry-consistency alone does not always give full consistency: a proposed star might contain extra leaves. The next lemma explains why this is the only possible defect once the colors separate neighbours.

\begin{lemma}\label{lem:cherry-consistency-injection}
Assume that $m\colon X\to\mathsf S_{d,k}$ is cherry-consistent at $x$ and that no two distinct neighbours of $x$ have the same proposed root color $\rho_m$. Then there is an injective map
\[
 \iota_x\colon N_G(x)\to N_{m(x)}(o)
\]
such that, for every $y\in N_G(x)$, the leaf $\iota_x(y)$ has color $\rho_m(y)$. In particular,
\[
 \deg_G(x)\leq \deg(m(x)).
\]
If equality holds, then $m(x)$ is the true colored star of $x$ with respect to the coloring $\rho_m$, namely
\[
 m(x)=\operatorname{st}_{G,\rho_m}(x).
\]
\end{lemma}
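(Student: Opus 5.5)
The plan is a case analysis on $\deg_G(x)$, using the distinctness of the proposed root colours $\rho_m(y)$ over $y\in N_G(x)$. The key reduction is that $m(x)$ is determined up to isomorphism by its root colour and leaf-colour counts $n_i(m(x))$. So an injection $\iota_x$ exists if and only if
\[
 a_i^m(x)\leq n_i(m(x))\qquad\text{for every } i\in[k].
\]
Given this, we send the neighbours of colour $i$ injectively to leaves of colour $i$, one colour class at a time.

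Since the $\rho_m(y)$ are pairwise distinct, each $a_i^m(x)\in\{0,1\}$. The inequality above therefore amounts to checking that every colour $\rho_m(y)$ with $y\in N_G(x)$ occurs among the leaves of $m(x)$. We verify this case by case.

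\begin{itemize}
\item If $\deg_G(x)=0$, there is nothing to prove, and $\iota_x$ is the empty map.
\item If $\deg_G(x)=1$, condition (ii) of Definition~\ref{def:cherry-consistency} says exactly that $m(x)$ has a leaf of colour $\rho_m(y)$.
\item If $\deg_G(x)\geq 2$, take any $y\in N_G(x)$ and pair it with another neighbour $z\neq y$. Condition (iii) states that the multiset $\{\rho_m(y),\rho_m(z)\}$ is contained in the leaf multiset of $m(x)$, so in particular $\rho_m(y)$ appears as a leaf colour.
\end{itemize}

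The subtle point is why only pairs are needed in the last case. A priori two neighbours of equal colour would require $n_i\geq 2$, and condition (iii) handles such a pair. But the hypothesis that neighbour colours are distinct rules this situation out, so each $a_i^m(x)$ is $0$ or $1$ and single-colour containment suffices. Define $\iota_x(y)$ to be a (the) leaf of colour $\rho_m(y)$. Injectivity follows because distinct neighbours have distinct colours and hence map to distinct leaves.

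Injectivity gives $\deg_G(x)=|N_G(x)|\leq |N_{m(x)}(o)|=\deg(m(x))$.

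For the equality case, if $\deg_G(x)=\deg(m(x))$ then $\iota_x$ is a bijection between finite sets of equal size. Hence, for every $i$,
\[
 n_i(m(x))=|\iota_x(\{y:\rho_m(y)=i\})|=a_i^m(x).
\]
The root colour of $\operatorname{st}_{G,\rho_m}(x)$ is $\rho_m(x)=\rho(m(x))$ by definition. The two stars thus have the same root colour and the same leaf-colour counts, so they coincide as elements of $\mathsf S_{d,k}$. I do not expect a real obstacle here; the only care needed is the degree-$0$ and degree-$1$ bookkeeping and the remark on why pairwise containment suffices.
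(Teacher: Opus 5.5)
Your proof is correct and follows essentially the same route as the paper: a case split on $\deg_G(x)$, using conditions (ii) and (iii) to show each neighbour colour $\rho_m(y)$ occurs as a leaf colour of $m(x)$, and using distinctness of the neighbour colours to make the choice of leaves injective. In the equality case you likewise read off that the leaf-colour counts match the actual neighbour-colour counts. Your reformulation via $a_i^m(x)\leq n_i(m(x))$ with $a_i^m(x)\in\{0,1\}$ is just a slightly more explicit bookkeeping of the same argument.
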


\begin{proof}
If $\deg_G(x)=0$, the conclusion is immediate from cherry-consistency. If $\deg_G(x)=1$, cherry-consistency says that the color of the unique neighbour occurs among the leaves of $m(x)$, so the required injection exists.

Assume now that $\deg_G(x)\geq 2$. Let $y\in N_G(x)$. Choose another neighbour $z\neq y$. By cherry-consistency, the two-color multiset $\{\rho_m(y),\rho_m(z)\}$ is contained in the leaf-color multiset of $m(x)$. Hence $m(x)$ has at least one leaf of color $\rho_m(y)$. Since the colors $\rho_m(y)$ are pairwise distinct as $y$ ranges over $N_G(x)$, these leaves can be chosen distinctly. This gives the desired injection.

The inequality of degrees follows from the injection. If equality holds, the injection is a bijection. Therefore the multiset of colors of actual neighbours of $x$ is exactly the multiset of leaf colors prescribed by $m(x)$, and the root color is, by definition, $\rho_m(x)$. Hence $m(x)$ is precisely the rooted colored star of $x$ for the coloring $\rho_m$.
\end{proof}

\begin{lemma}\label{lem:degree-inequality-from-cherries}
Let $G=(X,\nu,E)$ and $G'=(X',\nu',E')$ be graphings of maximum degree at most $d$. Suppose that
\[
 \overline C_k(G)=\overline C_k(G')
 \qquad\text{for every }k\geq 1.
\]
Then $G$ and $G'$ have the same average degree:
\[
 \int_X \deg_G(x)\,d\nu(x)=\int_{X'}\deg_{G'}(x')\,d\nu'(x').
\]
\end{lemma}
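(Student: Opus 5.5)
The plan is to prove the one inequality
\[
 \int_X\deg_G\,d\nu\leq\int_{X'}\deg_{G'}\,d\nu'
\]
and then swap the roles of $G$ and $G'$. The idea is to equip $G'$ with a true star assignment whose leaves carry pairwise distinct colors, transfer it approximately to $G$ using the hypothesis $\overline C_a(G)=\overline C_a(G')$, and apply Lemma~\ref{lem:cherry-consistency-injection} to compare degrees pointwise on most of $X$.

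\emph{Construction on $G'$.} By Lemma~\ref{lem:separated-coloring} with $r=1$, take a measurable coloring $s\colon X'\to[K]$, where $K=K_{d,1}$, that separates every radius-one ball. In particular, distinct neighbours of any vertex receive distinct colors. Put $m'(x')=\operatorname{st}_{G',s}(x')\in\mathsf S_{d,K}$. Then $\rho_{m'}=s$, so $m'$ is the true star assignment of its root-color map. Identify $\mathcal M=\mathsf S_{d,K}$ with $[a]$. Since $\chi(G',m')\in C_a(G')\subseteq\overline C_a(G)$, for every $\delta>0$ there is a measurable $m\colon X\to\mathcal M$ with $d_{\TV}(\chi(G,m),\chi(G',m'))\leq\delta$.

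\emph{Good vertices of $G$.} By Lemma~\ref{lem:cherry-test-consistency}, $m$ is cherry-consistent outside a set of measure $O(\delta)$. We also need the second hypothesis of Lemma~\ref{lem:cherry-consistency-injection}, that the neighbours of $x$ have pairwise distinct values of $\rho_m$. This can also be read off from cherries. Call a two-star outcome bad if its two leaf colors $B,C\in\mathcal M$ satisfy $\rho(B)=\rho(C)$. Such outcomes have probability zero under $\chi(G',m')$, because $s$ separates neighbours. Hence they have probability at most $\delta$ under $\chi(G,m)$.

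If some vertex $x$ with $\deg_G(x)\geq2$ has two neighbours with equal $\rho_m$, then a bad cherry is sampled at $x$ with conditional probability at least $\binom d2^{-1}$. Vertices of degree at most one satisfy the distinctness condition trivially. So the exceptional set has measure at most $\binom d2\delta$. Let $F$ be the union of these two exceptional sets, so that $\nu(F)=O(\delta)$. For every $x\notin F$, Lemma~\ref{lem:cherry-consistency-injection} gives $\deg_G(x)\leq\deg(m(x))$.

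\emph{Integration.} Both $\deg_G$ and $\deg\circ m$ take values in $[0,d]$, so
\[
 \int_X\deg_G\,d\nu\leq\int_X\deg(m(x))\,d\nu+d\,\nu(F).
\]
The law of the root color $m(x)$ is a deterministic image of the cherry outcome, namely its root color. Hence it is within $\delta$ in total variation of the law of $m'(x')$. Therefore $\int\deg(m(x))\,d\nu$ differs from $\int\deg(m'(x'))\,d\nu'$ by at most $d\delta$. Since $m'$ is the true star, $\deg(m'(x'))=\deg_{G'}(x')$. Letting $\delta\to0$ yields the inequality, and symmetry completes the proof.

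The main obstacle is the step where we verify the distinct-neighbour-colors hypothesis in $G$. Cherry-consistency alone does not provide it, so the bad-cherry count above must be made explicit. Once that is in place, the rest is bookkeeping with bounded functions.
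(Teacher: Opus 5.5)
Your proposal is correct and follows essentially the same route as the paper. Both use a separated true star assignment on $G'$, transfer it to $G$ via equality of the cherry sets, discard the two $O(\delta)$ exceptional sets (cherry-inconsistency and repeated neighbour root colors), apply the pointwise bound from Lemma~\ref{lem:cherry-consistency-injection}, and compare expected model degrees through the root marginal; your explicit bad-cherry argument and the constant $d\delta$ simply spell out what the paper writes as $O_d(\delta)$.
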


\begin{proof}
We first prove that the average degree of $G$ is at most that of $G'$. Choose, by Lemma~\ref{lem:separated-coloring}, a measurable coloring
\[
 s'\colon X'\to[K_{d,1}]
\]
which separates all radius-one balls in $G'$. Let
\[
 m'(x')=\operatorname{st}_{G',s'}(x')
\]
be the true separated star at $x'$. Put
\[
 \mathcal M=\mathsf S_{d,K_{d,1}},\qquad a=|\mathcal M|,
\]
choose a bijection $\mathcal M\leftrightarrow[a]$, and regard $m'$ as an $a$-coloring of $G'$.

By the equality of the closed cherry sets, for every $\delta>0$ there is a measurable map
\[
 m\colon X\to\mathcal M
\]
such that
\[
 d_{\TV}\bigl(\chi(G,m),\chi(G',m')\bigr)\leq\delta.
\]
By Lemma~\ref{lem:cherry-test-consistency}, $m$ is cherry-consistent outside a set of measure $O_d(\delta)$.

The cherry outcomes of $(G',m')$ never contain two neighbours of the root with the same proposed root color, because $s'$ separates neighbours of every vertex. Hence, by total variation closeness, the probability that a random cherry of $(G,m)$ contains two sampled neighbours with the same proposed root color is $O(\delta)$. If a vertex has two neighbours with the same proposed root color, then such a pair is sampled with probability at least $\binom d2^{-1}$. Thus, outside another set of measure $O_d(\delta)$, no two neighbours of $x$ have the same proposed root color.

On the good set, Lemma~\ref{lem:cherry-consistency-injection} gives
\[
 \deg_G(x)\leq \deg(m(x)).
\]
The exceptional set contributes $O_d(\delta)$ to the expectation, because degrees are uniformly bounded. Therefore
\[
 \int_X\deg_G(x)\,d\nu(x)\leq \int_X\deg(m(x))\,d\nu(x)+O_d(\delta).
\]
The quantity $\deg(m(x))$ is a function only of the root color $m(x)$, hence its expectation is determined by the root marginal of the cherry distribution. Since the cherry distributions of $(G,m)$ and $(G',m')$ are within $\delta$,
\[
 \int_X\deg(m(x))\,d\nu(x)=\int_{X'}\deg(m'(x'))\,d\nu'(x')+O_d(\delta).
\]
But $m'$ is the true star assignment, so $\deg(m'(x'))=\deg_{G'}(x')$. Letting $\delta\downarrow0$ yields
\[
 \int_X\deg_G(x)\,d\nu(x)\leq \int_{X'}\deg_{G'}(x')\,d\nu'(x').
\]
Interchanging the roles of $G$ and $G'$ gives the reverse inequality.
\end{proof}

\subsection{Cherries determine stars}

We now show that equality of all cherry statistics implies equality of all star statistics.

\begin{lemma}\label{lem:cherries-determine-stars}
Let $G$ and $G'$ be graphings of maximum degree at most $d$. Suppose that
\[
 \overline C_k(G)=\overline C_k(G')
 \qquad\text{for every }k\geq 1.
\]
Then
\[
 \calS_k(G)=\calS_k(G')
 \qquad\text{for every }k\geq 1.
\]
\end{lemma}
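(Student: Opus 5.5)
By symmetry it suffices to show $\calS_k(G')\subseteq\calS_k(G)$ for each fixed $k$. Since $\calS_k(G)$ is closed, it is enough to show that for every measurable $g'\colon X'\to[k]$ and every $\varepsilon>0$ there is a measurable $g\colon X\to[k]$ with $d_{\TV}(\sigma(G,g),\sigma(G',g'))\le\varepsilon$.

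\emph{Encoding.} We first encode the target coloring together with a separating coloring, as in the proof of Lemma~\ref{lem:degree-inequality-from-cherries}. By Lemma~\ref{lem:separated-coloring}, choose $s'\colon X'\to[K_{d,1}]$ separating all radius-one balls of $G'$. Form the product coloring $\widetilde g'=(g',s')\colon X'\to[k]\times[K_{d,1}]$ and the true star assignment
\[
 m'(x')=\operatorname{st}_{G',\widetilde g'}(x')\in\mathcal M:=\mathsf S_{d,kK_{d,1}}.
\]
Identify $\mathcal M$ with $[a]$, $a=|\mathcal M|$. Since $\overline C_a(G)=\overline C_a(G')$, for every $\delta>0$ there is a measurable $m\colon X\to\mathcal M$ with $d_{\TV}(\chi(G,m),\chi(G',m'))\le\delta$. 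Set $\widetilde g=\rho_m\colon X\to[k]\times[K_{d,1}]$ and let $g$ be its first coordinate.

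\emph{Good set.} Next we show that $m$ is the true star of $\rho_m$ on most of $X$. By Lemma~\ref{lem:cherry-test-consistency}, $m$ is cherry-consistent outside a set of measure $O_d(\delta)$. As in the proof of Lemma~\ref{lem:degree-inequality-from-cherries}, outside a further set of measure $O_d(\delta)$ no two neighbours of $x$ share a proposed root color; note that pairs with equal second coordinate never occur in the cherries of $(G',m')$. On the resulting good set $Y$, Lemma~\ref{lem:cherry-consistency-injection} gives $\deg_G(x)\le\deg(m(x))$, with equality forcing $m(x)=\operatorname{st}_{G,\rho_m}(x)$.

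\emph{Integrating the degree slack.} This is the key step, and it uses Lemma~\ref{lem:degree-inequality-from-cherries}. Let $D(x)=\deg(m(x))-\deg_G(x)$. Then $D\ge0$ on $Y$ and $|D|\le d$ everywhere, so
\[
 \int_X|D|\,d\nu\le\int_X D\,d\nu+2d\,\nu(X\setminus Y).
\]
The term $\int\deg(m)\,d\nu$ is determined by the root marginal of the cherry distribution, so it is within $O_d(\delta)$ of $\int\deg(m')\,d\nu'=\int\deg_{G'}\,d\nu'$. By Lemma~\ref{lem:degree-inequality-from-cherries} the latter equals $\int\deg_G\,d\nu$. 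Hence $\int_X D\,d\nu=O_d(\delta)$, and therefore $\int_X|D|\,d\nu=O_d(\delta)$. Since $D$ is integer-valued, $D=0$ outside a set of measure $O_d(\delta)$, and so $m(x)=\operatorname{st}_{G,\widetilde g}(x)$ outside a set of measure $O_d(\delta)$.

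\emph{Conclusion.} It follows that $\sigma(G,\widetilde g)$ is $O_d(\delta)$-close to the law of $m(x)$. That law is the root marginal of $\chi(G,m)$, which is $\delta$-close to the law of $m'(x')$, and the latter is exactly $\sigma(G',\widetilde g')$. Applying the affine continuous map $\mathsf S_{d,kK_{d,1}}\to\mathsf S_{d,k}$ that forgets the auxiliary coordinate, we get $d_{\TV}(\sigma(G,g),\sigma(G',g'))=O_d(\delta)$. Letting $\delta\to0$ gives $\sigma(G',g')\in\calS_k(G)$. The argument is routine apart from the degree-slack step, where the equality of average degrees turns the one-sided injection of Lemma~\ref{lem:cherry-consistency-injection} into a bijection almost everywhere.
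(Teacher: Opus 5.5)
Your proposal is correct and follows the paper's proof essentially step for step. Both use the same encoding $m'=\operatorname{st}_{G',(g',s')}$ and the same good set from Lemma~\ref{lem:cherry-test-consistency} plus neighbour separation. Both then apply Lemma~\ref{lem:degree-inequality-from-cherries} to upgrade the one-sided injection to equality of degrees almost everywhere, and finish by comparing root marginals and forgetting the separating coordinate. The only cosmetic difference is that you bound $\int_X|D|\,d\nu\le\int_X D\,d\nu+2d\,\nu(X\setminus Y)$ directly, whereas the paper bounds $\int_{X\setminus B_\delta}D\,d\nu$; the two are equivalent.
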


\begin{proof}
It is enough, by symmetry, to prove
\[
 \calS_k(G')\subseteq\calS_k(G)
\]
for every $k$.

Fix a measurable coloring $g'\colon X'\to[k]$. We shall approximate $\sigma(G',g')$ by star statistics of colorings of $G$.

Let $s'\colon X'\to[K_{d,1}]$ be a coloring separating all radius-one balls in $G'$, and define the product coloring
\[
 \widetilde g'=(g',s')\colon X'\to[k]\times[K_{d,1}].
\]
Put
\[
 m'(x')=\operatorname{st}_{G',\widetilde g'}(x').
\]
Thus $m'$ is a true separated star assignment. Let
\[
 \mathcal M=\mathsf S_{d,kK_{d,1}},\qquad a=|\mathcal M|,
\]
choose a bijection $\mathcal M\leftrightarrow[a]$, and regard $m'$ as an $a$-coloring of $G'$.

By equality of the cherry sets, for every $\delta>0$ there exists a measurable map
\[
 m\colon X\to\mathcal M
\]
such that
\[
 d_{\TV}\bigl(\chi(G,m),\chi(G',m')\bigr)\leq\delta.
\]
Let
\[
 \widetilde g(x)=\rho(m(x))\in[k]\times[K_{d,1}]
\]
be the proposed root color, and let $g(x)$ be its first coordinate.

By Lemma~\ref{lem:cherry-test-consistency}, $m$ is cherry-consistent outside a set of measure $O_d(\delta)$. As in the proof of Lemma~\ref{lem:degree-inequality-from-cherries}, the separation visible in the cherry distribution of $(G',m')$ implies that, outside another set of measure $O_d(\delta)$, no two neighbours of $x$ have the same proposed root color $\widetilde g$.

Therefore Lemma~\ref{lem:cherry-consistency-injection} gives, outside a set $B_\delta$ of measure $O_d(\delta)$,
\[
 \deg_G(x)\leq\deg(m(x)).
\]
By Lemma~\ref{lem:degree-inequality-from-cherries}, $G$ and $G'$ have the same average degree. Moreover, total variation closeness of the cherry distributions gives
\[
 \int_X\deg(m(x))\,d\nu(x)=\int_{X'}\deg(m'(x'))\,d\nu'(x')+O_d(\delta)=
\]
\[=\int_{X'}\deg_{G'}(x')\,d\nu'(x')+O_d(\delta)
 =\int_X\deg_G(x)\,d\nu(x)+O_d(\delta).
\]
On $X\setminus B_\delta$ the difference $\deg(m(x))-\deg_G(x)$ is non-negative and at most $d$.  The preceding expectation estimate, together with the bound $d\nu(B_\delta)=O_d(\delta)$, gives
\[
 \int_{X\setminus B_\delta}(\deg(m(x))-\deg_G(x))\,d\nu(x)=O_d(\delta).
\]
Since this difference is integer-valued, the set on which it is positive has measure $O_d(\delta)$. Hence, outside a set of measure $O_d(\delta)$,
\[
 \deg(m(x))=\deg_G(x).
\]
Lemma~\ref{lem:cherry-consistency-injection} now implies that, outside a set of measure $O_d(\delta)$,
\[
 m(x)=\operatorname{st}_{G,\widetilde g}(x).
\]

The root marginal of $\chi(G,m)$ is the distribution of the model $m(x)$, and the root marginal of $\chi(G',m')$ is the distribution of $m'(x')$. Hence
\[
 d_{\TV}\bigl(\operatorname{law}(m(x)),\operatorname{law}(m'(x'))\bigr)=O(\delta).
\]
Combining this with the previous paragraph gives
\[
 \sigma(G,\widetilde g)\longrightarrow \sigma(G',\widetilde g')
\]
in total variation as $\delta\downarrow0$. Finally, forgetting the auxiliary separating coordinate $s'$ is a continuous affine projection from $(kK_{d,1})$-colored stars to $k$-colored stars. Therefore
\[
 \sigma(G,g)
\]
can be made arbitrarily close to
\[
 \sigma(G',g').
\]
This proves $\sigma(G',g')\in\calS_k(G)$. Since $g'$ was arbitrary, $\calS_k(G')\subseteq\calS_k(G)$.
\end{proof}

\begin{proof}[Proof of Theorem~\ref{thm:cherry-equivalence}]
The equivalence of local-global convergence and convergence of all colored star sets is Theorem~\ref{thm:main}. By Lemma~\ref{lem:star-to-cherry}, convergence of all colored star sets implies convergence of all colored cherry sets.

Conversely, suppose that $(G_n)$ is colored-cherry convergent. Let $G$ and $G'$ be two subsequential local-global limits of $(G_n)$, whose existence is guaranteed by Theorem~\ref{thm:HLS}. Since local-global convergence along a subsequence implies star convergence by Lemma~\ref{lem:star-continuity}, and hence cherry convergence by Lemma~\ref{lem:star-to-cherry}, the cherry sets along these two subsequences converge respectively to the cherry sets of their limits.  Because the cherry sets converge along the whole sequence, we have
\[
 \overline C_k(G)=\overline C_k(G')
 \qquad\text{for every }k\geq 1.
\]
By Lemma~\ref{lem:cherries-determine-stars},
\[
 \calS_k(G)=\calS_k(G')
 \qquad\text{for every }k\geq 1.
\]
Theorem~\ref{thm:main} then implies that $G$ and $G'$ are local-global equivalent. Hence all local-global subsequential limits of $(G_n)$ are equivalent, so $(G_n)$ is local-global convergent. This proves the theorem.
\end{proof}

\section*{Acknowledgements}
The author acknowledges the use of ChatGPT as a writing aid during the preparation of this manuscript, specifically for improving grammar, style, and exposition. All mathematical ideas, statements, and proofs were developed by the author and were already present in a preliminary version of the paper written around 2020. This research was supported by the NKFIH excellence 154126 grant.

\end{document}